\documentclass[oneside,english]{amsart}
\usepackage{lmodern}
\usepackage[T1]{fontenc}
\usepackage[latin9]{inputenc}
\usepackage{geometry}
\geometry{verbose,tmargin=3.5cm,bmargin=3.5cm,lmargin=3.2cm,rmargin=3.2cm}
\usepackage{mathrsfs}
\usepackage{amsbsy}
\usepackage{amstext}
\usepackage{amsthm}
\usepackage{amssymb}

\makeatletter
\numberwithin{equation}{section}
\numberwithin{figure}{section}
\theoremstyle{plain}
\newtheorem{thm}{\protect\theoremname}
\theoremstyle{plain}
\newtheorem{prop}[thm]{\protect\propositionname}
\theoremstyle{plain}
\newtheorem{lem}[thm]{\protect\lemmaname}
\theoremstyle{remark}
\newtheorem{rem}[thm]{\protect\remarkname}

\makeatother

\usepackage{babel}
\providecommand{\lemmaname}{Lemma}
\providecommand{\propositionname}{Proposition}
\providecommand{\remarkname}{Remark}
\providecommand{\theoremname}{Theorem}

\begin{document}
\global\long\def\e{e}%
\global\long\def\V{{\rm Vol}}%
\global\long\def\bs{\boldsymbol{\sigma}}%
\global\long\def\br{\boldsymbol{\rho}}%
\global\long\def\bp{\boldsymbol{\pi}}%
\global\long\def\btau{\boldsymbol{\tau}}%
\global\long\def\bx{\mathbf{x}}%
\global\long\def\by{\mathbf{y}}%
\global\long\def\bz{\mathbf{z}}%
\global\long\def\bv{\mathbf{v}}%
\global\long\def\bu{\mathbf{u}}%
\global\long\def\bi{\mathbf{i}}%
\global\long\def\bn{\mathbf{n}}%
\global\long\def\grad{\nabla_{sp}}%
\global\long\def\Hess{\nabla_{sp}^{2}}%
\global\long\def\lp{\Delta_{sp}}%
\global\long\def\gradE{\nabla_{\text{Euc}}}%
\global\long\def\HessE{\nabla_{\text{Euc}}^{2}}%
\global\long\def\HessEN{\hat{\nabla}_{\text{Euc}}^{2}}%
\global\long\def\ddq{\frac{d}{dR}}%
\global\long\def\qs{q_{\star}}%
\global\long\def\qss{q_{\star\star}}%
\global\long\def\lm{\lambda_{min}}%
\global\long\def\Es{E_{\star}}%
\global\long\def\As{A_{\star}}%
\global\long\def\EH{E_{\Hess}}%
\global\long\def\Esh{\hat{E}_{\star}}%
\global\long\def\ds{d_{\star}}%
\global\long\def\Cs{\mathscr{C}_{\star}}%
\global\long\def\nh{\boldsymbol{\hat{\mathbf{n}}}}%
\global\long\def\BN{\mathbb{B}^{N}}%
\global\long\def\ii{\mathbf{i}}%
\global\long\def\SN{\mathbb{S}^{N-1}}%
\global\long\def\SM{\mathbb{S}^{M-1}}%
\global\long\def\SNq{\mathbb{S}^{N-1}(q)}%
\global\long\def\SNqd{\mathbb{S}^{N-1}(q_{d})}%
\global\long\def\SNqp{\mathbb{S}^{N-1}(q_{P})}%
\global\long\def\nd{\nu^{(\delta)}}%
\global\long\def\nz{\nu^{(0)}}%
\global\long\def\cls{c_{LS}}%
\global\long\def\qls{q_{LS}}%
\global\long\def\dls{\delta_{LS}}%
\global\long\def\E{\mathbb{E}}%
\global\long\def\P{\mathbb{P}}%
\global\long\def\R{\mathbb{R}}%
\global\long\def\spp{{\rm Supp}(\mu_{P})}%
\global\long\def\indic{\mathbf{1}}%
\global\long\def\lsc{\mu_{{\rm sc}}}%
\newcommand{\SNarg}[1]{\mathbb S^{N-1}(#1)}
\global\long\def\se{s(E)}%
\global\long\def\ses{s(\Es)}%
\global\long\def\so{s(0)}%
\global\long\def\sef{s(E_{f})}%
\global\long\def\seinf{s(E_{\infty})}%
\global\long\def\L{\mathcal{L}}%
\global\long\def\gflow#1#2{\varphi_{#2}(#1)}%
\global\long\def\S{\mathscr{S}}%
\global\long\def\Asm{{\rm (A)}}%
\global\long\def\Asmnb{{\rm A}}%
\global\long\def\Frep{F^{{\rm Rep}}}%
\global\long\def\s{\mathfrak{s}}%
\global\long\def\e{e}%
\global\long\def\EsN{E_{\star,N}}%

\theoremstyle{definition}
\newtheorem*{assumption}{Assumption}
\title{On the second moment method and RS phase of multi-species spherical
spin glasses}
\author{Eliran Subag}
\begin{abstract}
Excluding some special cases, computing the critical inverse-temperature
$\beta_{c}$ of a mixed $p$-spin spin glass model is a difficult
task. The only known method to calculate its value for a general model
requires the full power of the Parisi formula. On the other hand,
an 
easy application of the second moment method to the partition function
yields an explicit lower bound $\beta_{m}\leq\beta_{c}$ to the critical inverse-temperature. Interestingly,
in the important case of the Sherrington-Kirkpatrick model $\beta_{m}=\beta_{c}$.
In this work we consider the multi-species spherical mixed $p$-spin
models without external field, and characterize by a simple condition
the models for which the second moment method works in the whole replica
symmetric phase, namely, models such that $\beta_{m}=\beta_{c}$. In particular, for those models we obtain the value of $\beta_c$. 
\end{abstract}

\maketitle

\section{Introduction}

The high-temperature  phase of a mixed $p$-spin spin glass model (with
no external field) consists of inverse-temperatures $\beta\leq\beta_{c}$
such that, for large $N$, 
\begin{equation}
\E F_{N,\beta}:=\frac{1}{N}\E\log Z_{N,\beta}\approx\frac{1}{N}\log\E Z_{N,\beta}.\label{eq:apx}
\end{equation}
Computing the mean of the partition function $Z_{N,\beta}$ is trivial, 
and thus for $\beta$ as above one has the mean of the free energy $F_{N,\beta}$
in the large $N$ limit. 
Beyond the expression for the free energy, the system simplifies in this phase in several ways. Most importantly,  
no `replica symmetry
breaking' occurs --- that is, independent samples from the Gibbs measure
are typically roughly orthogonal to each other. 

Despite this, computing the critical value $\beta_{c}$ is generally a  difficult
problem. For general mixed $p$-spin models, the only known method
to achieve its value heavily relies on the Parisi formula \cite{ParisiFormula,Parisi,Talag2},
one of the deepest, most complicated results in mean-field spin glass
theory. See the works of Chen \cite{MR3988771} and Talagrand \cite{Talag}
where a characterization for sub-critical inverse-temperatures $\beta$
is derived from the optimality criterion for the Parisi distribution,
for (single-species) models with Ising and spherical spins respectively. 

In sharp contrast to the usage of the Parisi formula in its extreme
simplicity, an application of the second moment method to the partition
function $Z_{N,\beta}$ very easily allows one to lower bound the
critical inverse-temperature $\beta_{c}$. Interestingly, for the
important SK model \cite{SK75}, the bound actually gives the correct
critical value $\beta_{c}$. In this paper we focus on the question:
when does the second moment work up to the critical $\beta_{c}$?
We answer this question for the multi-species spherical mixed $p$-spin
models. 

\subsection{Definition of the model}

Consider a finite set of species $\S$, which will be fixed throughout
the paper. For each $N\geq1$, we will suppose that
\[
\{1,\ldots,N\}=\bigcup_{s\in\S}I_{s},\quad\text{for some disjoint }I_{s}.
\]
The subsets $I_{s}$, of course, vary with $N$. Denoting $N_{s}:=|I_{s}|$,
we will assume that the proportion of each species converges
\[
\lim_{N\to\infty}\frac{N_{s}}{N}=\lambda(s)\in(0,1),\quad\text{for all }s\in\S.
\]
Let $S(d)=\{\bx\in\R^{d}:\,\|\bx\|=\sqrt{d}\}$ be the sphere of radius
$\sqrt{d}$ in dimension $d$. The \emph{configuration space} of the
multi-species spherical mixed $p$-spin model is
\[
S_{N}=\left\{ (\sigma_{1},\ldots,\sigma_{N})\in\R^{N}:\,\forall s\in\S,\,(\sigma_{i})_{i\in I_{s}}\in S(N_{s})\right\} .
\]

Denoting $\mathbb{Z}_{+}:=\{0,1,\ldots\}$ and $|p|:=\sum_{s\in\S}p(s)$
for $p\in\mathbb{Z}_{+}^{\S}$, let
\[
P=\Big\{ p\in\mathbb{Z}_{+}^{\S}:\,|p|\geq2\Big\}.
\]
Given some nonnegative numbers $(\Delta_{p})_{p\in P}$, define the
\emph{mixture polynomial} in $x=(x(s))_{s\in\S}\in\R^{\S}$,
\begin{equation}
\xi(x)=\sum_{p\in P}\Delta_{p}^{2}\prod_{s\in\S}x(s)^{p(s)}.\label{eq:mixture}
\end{equation}
We will assume that $\xi(1+\epsilon)<\infty$ for some $\epsilon>0$,
where for $a\in\R$ we write $\xi(a)$ for the evaluation of $\xi$
at the constant function $x\equiv a$. 

The multi-species mixed $p$-spin \emph{Hamiltonian} $H_{N}:S_{N}\to\R$
corresponding to the mixture $\xi$ is given by
\begin{equation}
H_{N}(\bs)=\sqrt{N}\sum_{k=2}^{\infty}\sum_{i_{1},\dots,i_{k}=1}^{N}\Delta_{i_{1},\dots,i_{k}}J_{i_{1},\dots,i_{k}}\sigma_{i_{1}}\cdots\sigma_{i_{k}},\label{eq:Hamiltonian-1}
\end{equation}
where $J_{i_{1},\dots,i_{k}}$ are i.i.d. standard normal variables
and if $\#\{j\leq k:\,i_{j}\in I_{s}\}=p(s)$ for any $s\in\S$, then
$\Delta_{i_{1},\dots,i_{k}}=\Delta_{i_{1},\dots,i_{k}}(N)$ is defined
by
\begin{equation}
\Delta_{i_{1},\dots,i_{k}}^{2}=\Delta_{p}^{2}\frac{\prod_{s\in\S}p(s)!}{|p|!}\prod_{s\in\S}N_{s}^{-p(s)}.\label{eq:Delta}
\end{equation}
By a straightforward calculation, the covariance function of $H_{N}(\bs)$
is given by
\begin{equation}
\E H_{N}(\bs)H_{N}(\bs')=N\xi(R(\bs,\bs')),\label{eq:cov}
\end{equation}
where we define the overlap vector
\[
R(\bs,\bs'):=\big(R_{s}(\bs,\bs')\big)_{s\in\S},\quad R_{s}(\bs,\bs'):=N_{s}^{-1}\sum_{i\in I_{s}}\sigma_{i}\sigma_{i}'.
\]

Identifying $S_{N}$ with the product space $\prod_{s\in\S}S(N_{s})$,
let $\mu$ be the product of the uniform measures on each of the spheres
$S(N_{s})$. The \emph{partition function} and \emph{free energy}
at inverse-temperature $\beta\geq0$ are, respectively, defined by
\begin{equation}
Z_{N,\beta}:=\int_{S_{N}}e^{\beta H_{N}(\bs)}d\mu(\bs)\text{\ensuremath{\quad and\quad}}F_{N,\beta}:=\frac{1}{N}\log Z_{N,\beta}.\label{eq:Fbeta}
\end{equation}
The \emph{Gibbs measure} is the random probability measure on $S_{N}$ with density
\[
\frac{dG_{N,\beta}}{d\mu}(\bs)=Z_{N,\beta}^{-1}e^{\beta H_{N}(\bs)}.
\]
If $|\S|=1$, all the definitions above coincide with the usual (single-species)
spherical mixed $p$-spin model.

By Jensen's inequality, 
\begin{equation}
\E F_{N,\beta}=\frac{1}{N}\E\log Z_{N,\beta}\leq\frac{1}{N}\log\E Z_{N,\beta}=\frac{1}{2}\beta^{2}\xi(1).\label{eq:Jensen}
\end{equation}
It is not difficult to check (also using Jensen's inequality, see
Lemma \ref{lem:RS} below) that for any $\beta'<\beta$, 
\[
\E F_{N,\beta}\leq\E F_{N,\beta'}+\frac{1}{2}(\beta^{2}-\beta'^{2})\xi(1).
\]
Hence, there exists a critical inverse-temperature $\beta_{c}$ such
that 
\begin{equation}
\lim_{N\to\infty}\E F_{N,\beta}=\frac{1}{2}\beta^{2}\xi(1)\iff\beta\leq\beta_{c}.\label{eq:bcdef}
\end{equation}

\subsection{The second moment method}

By the Paley\textendash Zygmund inequality, if we are able to show
for some $\beta$ that 
\begin{equation}
\lim_{N\to\infty}\frac{1}{N}\log\E Z_{N,\beta}^{2}=\lim_{N\to\infty}\frac{1}{N}\log\Big((\E Z_{N,\beta})^{2}\Big)=\beta^{2}\xi(1),\label{eq:2ndm}
\end{equation}
then for any $\theta\in(0,1)$, $Z_{N,\beta}\geq\theta\E Z_{N,\beta}$
with probability not exponentially small in $N$. Combined with the
well-known concentration of the free energy, (see e.g., \cite[Theorem 1.2]{PanchenkoBook})
this easily implies that $\beta\leq\beta_{c}$. 

By Fubini's theorem and symmetry we have that
\begin{align*}
\E Z_{N,\beta}^{2} & =\int\int\E\exp\left\{ \beta H_{N}(\bs)+\beta H_{N}(\bs')\right\} d\mu(\bs)d\mu(\bs')\\
 & =\int\exp\left\{ N\beta^{2}(\xi(1)+\xi(R(\bs,\bs_{0}))\right\} d\mu(\bs),
\end{align*}
where $\bs_{0}\in S_{N}$ is an arbitrary point. Using the coarea
formula, one can then check that
\[
\E Z_{N,\beta}^{2}=\int_{[-1,1]^{\S}}\prod_{s\in\S}\frac{\omega_{N_{s}-1}}{\omega_{N_{s}}}\left(1-r(s)^{2}\right)^{\frac{N_{s}-3}{2}}e^{N\beta^{2}(\xi(1)+\xi(r))}dr,
\]
where $\omega_{d}$ denotes the volume of the unit sphere in $\R^{d}$.
Therefore
\begin{equation}
\lim_{N\to\infty}\frac{1}{N}\log\E Z_{N,\beta}^{2}=\beta^{2}\xi(1)+\max_{r\in[0,1)^{\S}}f_{\beta}(r),\label{eq:Z2}
\end{equation}
where 
\[
f_{\beta}(r):=\frac{1}{2}\sum_{s\in\S}\lambda(s)\log(1-r(s)^{2})+\beta^{2}\xi(r).
\]

If we define the threshold inverse-temperature
\[
\beta_{m}:=\max\left\{ \beta\geq0:\,\max_{r\in[0,1)^{\S}}f_{\beta}(r)=f_{\beta}(0)=0\right\} ,
\]
then (\ref{eq:2ndm}) holds if and only if $\beta\leq\beta_{m}$,
from which we have that $\beta_{m}\leq\beta_{c}$. 

The second moment method similarly works for models with Ising spins
up to a threshold $\beta_{m}$ as above, if one appropriately modifies
the logarithmic entropy term in the definition of $f_{\beta}(r)$.
For the SK model, Talagrand used the method in \cite[Section 2]{Talagrand1998}
to show that $\beta_{m}=1/\sqrt{2}$ (the fact that $\beta_{c}\geq1/\sqrt{2}$
was first proved in \cite{AizenmanLebowitzRuelle}). Exploiting special
properties of the SK model, Comets proved in \cite{Comets96} that
$\beta_{c}\leq1/\sqrt{2}$ and therefore $\beta_{c}=\beta_{m}$. For
the $p$-spin generalization of the SK models, Talagrand \cite{Talagrand2000}
used a truncated second moment argument to prove a lower bound for
the critical $\beta_{c}$. Bolthausen \cite{BolthausenMorita} applied
the second moment method conditional on an event related to the TAP
equations to compute the free energy of the SK model with an external
field at high-temperature. 

In the context of the spherical models but in a different direction
than the above, the second moment method was used in the study of
critical points. In \cite{AuffingerGold,geometryMixed,Kivimae,2nd,2ndarbitraryenergy}
it was applied to the complexity of critical points to prove its concentration
around the mean \cite{ABA2,A-BA-C,McKennaComplexity}. 

\subsection{Main results}

Our main result is the following theorem. We will make the following
assumption,
\[
x\in[0,1]^{\S}\setminus\{0\}\implies\xi(x)>0.\tag{\ensuremath{\Asmnb}}
\]
 In the single-species case $|\S|=1$, which is also covered by our
results, the assumption always holds.
\begin{thm}
\label{thm:main}Assuming \Asm, $\beta_{m}=\beta_{c}$ if and only
if 
\begin{equation}
\Big(\frac{d}{dr(s)}\frac{d}{dr(t)}f_{\beta_{m}}(0)\Big)_{s,t\in\S}\text{\ \ is a singular matrix.}\label{eq:HessMat}
\end{equation}
\end{thm}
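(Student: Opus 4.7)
I would introduce the threshold $\beta_{AT}\in(0,+\infty]$ as the supremum of $\beta\ge0$ for which the Hessian in (\ref{eq:HessMat}), namely
\[
H(\beta) \;=\; -\Lambda+\beta^{2}M,\qquad \Lambda=\mathrm{diag}\big(\lambda(s)\big)_{s\in\S},\quad M_{s,t}=\partial_{s}\partial_{t}\xi(0),
\]
is negative semidefinite. Because $f_{\beta}(0)=0$ and $\nabla f_{\beta}(0)=0$, any positive eigenvalue of $H(\beta)$ forces $f_{\beta}(r)>0$ for some $r$ near $0$, so $\beta_{m}\le\beta_{AT}$; and under \Asm\ the condition (\ref{eq:HessMat}) is precisely $\beta_{m}=\beta_{AT}$ (with $\beta_{AT}<\infty$; note that if the degree-$2$ part of $\xi$ vanishes then $\beta_{AT}=\infty$ and (\ref{eq:HessMat}) fails automatically). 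Together with $\beta_{m}\le\beta_{c}$, the theorem splits into the two one-sided implications
\begin{description}
  \item[(I)] $\beta_{c}\le\beta_{AT}$, yielding the direction $(\ref{eq:HessMat})\Rightarrow \beta_{m}=\beta_{c}$;
  \item[(II)] $\beta_{m}<\beta_{AT}\ \Longrightarrow\ \beta_{m}<\beta_{c}$, yielding the converse.
\end{description}

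For (I), my approach would be a Parisi/Guerra stability argument. Given $\beta>\beta_{AT}$, I would choose a unit vector $v\in\R^{\S}$ with $v^{\top}H(\beta)v>0$ and plug into the multi-species spherical Guerra $1$-RSB upper bound (in the vein of Panchenko and Bates--Sohn) a two-atom Parisi measure $\zeta=(1-m)\delta_{0}+m\delta_{tv}$ with $t,m\downarrow0$. A second-order Taylor expansion of the resulting functional in $(t,m)$, using the eigenvector condition on $v$, should produce a bound strictly below $\tfrac{1}{2}\beta^{2}\xi(1)$, giving $\lim_{N}\E F_{N,\beta}<\tfrac{1}{2}\beta^{2}\xi(1)$ and hence $\beta>\beta_{c}$.

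For (II), my approach would be a truncated second moment. Fix $\beta\in(\beta_{m},\beta_{AT})$; strict negative-definiteness of $H(\beta)$ yields $\rho,\delta>0$ with $f_{\beta}(r)\le -\delta\|r\|^{2}$ on $\{\|r\|\le\rho\}$, while the positive global maximum of $f_{\beta}$ is attained at some $r^{\star}$ with $\|r^{\star}\|>\rho$. I would construct a disorder-measurable cutoff $\phi(\bs)$ and set
\[
\tilde{Z}_{N,\beta}\;=\;\int\phi(\bs)\,e^{\beta H_{N}(\bs)}\,d\mu(\bs)
\]
arranged so as to simultaneously enforce
\[
\E\tilde{Z}_{N,\beta}=(1-o(1))\E Z_{N,\beta}\quad\text{and}\quad \E\tilde{Z}_{N,\beta}^{2}\le e^{o(N)}\bigl(\E\tilde{Z}_{N,\beta}\bigr)^{2};
\]
a natural candidate is the indicator that $\bs$ has no atypical excess of partners at overlap near $r^{\star}$, so that the pair integral defining $\E\tilde{Z}_{N,\beta}^{2}$ is effectively restricted to $\|R(\bs,\bs')\|\le\rho$ where $f_{\beta}\le0$. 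Paley--Zygmund applied to $\tilde{Z}_{N,\beta}\le Z_{N,\beta}$, together with concentration of $F_{N,\beta}$, would then give $\lim_{N}\E F_{N,\beta}=\tfrac{1}{2}\beta^{2}\xi(1)$, so $\beta\le\beta_{c}$ and hence $\beta_{m}<\beta_{c}$.

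The hard part will be the construction of $\phi$ in (II): it must kill an annealed-subleading fraction of $\E Z_{N,\beta}$ while entirely removing the overlap-$r^{\star}$ contribution to the second moment. This requires sharp large-deviations control over the number of configurations at prescribed vector overlap to a fixed $\bs$ in the multi-species spherical geometry --- the vector-valued, species-indexed analog of the Talagrand-type truncations used for single-species mixed $p$-spin models.
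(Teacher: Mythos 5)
Your reduction of the theorem to the two one-sided statements (I) $\beta_{c}\le\beta_{AT}$ and (II) $\beta_{m}<\beta_{AT}\Rightarrow\beta_{m}<\beta_{c}$ is sound, and your identification of $\beta_{AT}$ with the singularity condition is correct (the Hessian at $\beta_m$ is automatically negative semidefinite, so singularity is equivalent to $\beta_m=\beta_{AT}$). However, your plan for (I) has a genuine gap: the Guerra/1-RSB upper bound for \emph{multi-species} spherical models is only known when $\xi$ is convex on $[0,1]^{\S}$ (the interpolation error involves $\xi(R)-\xi(q)-\nabla\xi(q)\cdot(R-q)$, whose sign requires convexity), and the theorem is claimed --- and is new precisely --- for mixtures that need not be convex. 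So a Parisi/Guerra stability argument cannot establish $\beta_c\le\beta_{AT}$ in the stated generality. The paper instead proves the corresponding statement (Proposition \ref{prop:singular}) by a self-contained argument with no Parisi input: for $\beta'<\beta_c$ it shows that most points $\bs$ of the near-level set $L_{\beta'}(\epsilon)$ have band free energy $\frac1N\log\int_{B(\bs,\alpha r,\delta)}e^{\beta'H_N}d\mu\approx\frac12\beta'^2\xi(1)+f_{\beta'}(\alpha r)$ up to an error $\kappa(\alpha^2r^2)=o(\alpha^2)$ (via conditioning on $H_N(\bs)$ and a change of mixture to $\tilde\xi_r$, plus the Ghirlanda--Guerra/positivity machinery to control the error at $r=0$); a non-negative eigenvalue of the Hessian then makes this band free energy exceed $\frac12\beta'^2\xi(1)$, contradicting Jensen. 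If you want to salvage your route for (I), you would have to restrict to convex $\xi$, which loses the main new content.

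For (II) your skeleton is the right one (it is the Talagrand-style truncated second moment, as the paper itself notes), but you leave the construction of the cutoff $\phi$ --- which you correctly flag as the hard part --- entirely open, and the overlap-counting cutoff you sketch would require delicate large-deviation control that is not needed. The paper's choice is much simpler: truncate on the energy level rather than on overlap statistics, i.e.\ take $U_{N,\beta}=\mu\{\bs:|H_N(\bs)/N-\beta\xi(1)|<N^{-\gamma}\}$. Its second moment replaces $\beta^{2}\xi(r)$ in $f_\beta$ by $\beta^{2}\xi(1)\xi(r)/(\xi(1)+\xi(r))$, which under \Asm{} is strictly smaller for every $r\neq0$; combined with the strict negative definiteness of the Hessian near $r=0$ (which controls a neighborhood of the origin where the two functionals agree to second order), this yields $\beta_m<\tilde\beta_m\le\beta_c$ with no further construction. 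So (II) is fixable by a simpler device, but as written your proposal does not constitute a proof of either direction.
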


Since $\big(\frac{d}{dr(s)}f_{\beta}(0)\big)_{s\in\S}=0$, the matrix
in (\ref{eq:HessMat}) determines the local behavior of $f_{\beta_{m}}(r)$
around the origin. Roughly speaking, the theorem says that the second moment
method works up to the critical inverse-temperature if for $\beta$
slightly above $\beta_{m}$ the condition that $\max_{r\in[0,1)^{\S}}f_{\beta}(r)=0$
is broken around $r=0$. 

The proof that $\beta_{m}<\beta_{c}$ if the matrix in (\ref{eq:HessMat})
is regular is the easy part of the theorem. It will follow by showing
that in this case $\beta_{m}<\tilde{\beta}_{m}\leq\beta_{c}$ for
some other threshold $\tilde{\beta}_{m}$, which will arise from applying
the\emph{ }second moment method to a random variable different from
$Z_{N,\beta}$. The argument essentially uses the same idea as in
the truncated second moment method used by Talagrand in \cite{Talagrand2000}. 

To prove the main part of the theorem, concerning the case that the matrix
(\ref{eq:HessMat}) is singular, we will prove the following
proposition.
\begin{prop}
\label{prop:singular}Assuming \Asm, if for some $\beta$,
\begin{equation}
\Big(\frac{d}{dr(s)}\frac{d}{dr(t)}f_{\beta}(0)\Big)_{s,t\in\S}\text{\ \ has a non-negative eigenvalue,}\label{eq:mat}
\end{equation}
then $\beta_{c}\leq\beta$.
\end{prop}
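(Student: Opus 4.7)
The plan is to reduce the claim to an analysis of the pure quadratic component of $\xi$. Write
\[
\xi_{2}(x)=\sum_{p\in P:\,|p|=2}\Delta_{p}^{2}\prod_{s\in\S}x(s)^{p(s)},\qquad\tilde{\xi}=\xi-\xi_{2},
\]
so that $A:=\big(\frac{d^{2}\xi}{dx(s)dx(t)}(0)\big)_{s,t\in\S}$ is also the Hessian of $\xi_{2}$ at $0$ and the matrix in (\ref{eq:mat}) equals $M(\beta):=-\mathrm{diag}(\lambda)+\beta^{2}A$. Decompose $H_{N}=H_{N}^{(2)}+H_{N}^{(>2)}$ into two independent centered Gaussian fields with covariances $N\xi_{2}(R(\bs,\bs'))$ and $N\tilde{\xi}(R(\bs,\bs'))$, respectively. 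Let $Z_{N,\beta}^{(2)}$, $F_{N,\beta}^{(2)}$, $\beta_{c}^{(2)}$ and $\beta_{m}^{(2)}$ denote the partition function, free energy, critical inverse-temperature, and second-moment threshold of the pure $2$-spin model driven by $\xi_{2}$. I will establish (i) $\beta_{c}\leq\beta_{c}^{(2)}$, (ii) $\beta_{c}^{(2)}=\beta_{m}^{(2)}$, and (iii) $\beta_{m}^{(2)}$ equals the smallest $\beta$ for which $M(\beta)$ has a non-negative eigenvalue, so that when (\ref{eq:mat}) holds at $\beta$ we obtain $\beta\geq\beta_{m}^{(2)}=\beta_{c}^{(2)}\geq\beta_{c}$.

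For (i), integrating out only $H_{N}^{(>2)}$ gives $\E_{H^{(>2)}}[Z_{N,\beta}]=\exp\!\big(\frac{1}{2}N\beta^{2}\tilde{\xi}(1)\big)\,Z_{N,\beta}^{(2)}$, because $H_{N}^{(>2)}(\bs)$ is a centered Gaussian of variance $N\tilde{\xi}(1)$ at each fixed $\bs$. Applying Jensen's inequality to this conditional expectation and then taking the full expectation yields
\[
\E F_{N,\beta}\leq\frac{1}{2}\beta^{2}\tilde{\xi}(1)+\E F_{N,\beta}^{(2)}.
\]
When $\beta\leq\beta_{c}$ the left-hand side converges to $\frac{1}{2}\beta^{2}\xi(1)=\frac{1}{2}\beta^{2}\big(\xi_{2}(1)+\tilde{\xi}(1)\big)$; combining with the Jensen upper bound $\limsup_{N}\E F_{N,\beta}^{(2)}\leq\frac{1}{2}\beta^{2}\xi_{2}(1)$ from (\ref{eq:Jensen}) forces $\lim_{N}\E F_{N,\beta}^{(2)}=\frac{1}{2}\beta^{2}\xi_{2}(1)$, i.e., $\beta\leq\beta_{c}^{(2)}$.

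For (iii), since $\xi_{2}$ is homogeneous of degree two, $f_{\beta}^{(2)}(r)=\frac{1}{2}\sum_{s}\lambda(s)\log(1-r(s)^{2})+\frac{1}{2}\beta^{2}r^{T}Ar$ can be analyzed ray-by-ray: on $r=tv$ the derivative equals $t\big[\beta^{2}v^{T}Av-\sum_{s}\lambda(s)v(s)^{2}/(1-t^{2}v(s)^{2})\big]$, whose bracket is decreasing in $t$ and equals $v^{T}M(\beta)v$ at $t=0$. Hence $r=0$ is the global maximum of $f_{\beta}^{(2)}$ iff $M(\beta)\preceq0$, from which (iii) follows. For (ii), the pure $2$-spin Hamiltonian $H_{N}^{(2)}(\bs)=\frac{1}{2}\bs^{T}\mathcal{M}\bs$ is a Gaussian quadratic form in which $\mathcal{M}$ is a block Gaussian matrix whose variance structure is encoded by $A$, so $Z_{N,\beta}^{(2)}$ is a spherical integral of the exponential of a block Gaussian matrix over $\prod_{s}S(N_{s})$. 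Such integrals are amenable to a direct saddle-point / block random matrix analysis that yields a limiting free energy coinciding with $\frac{1}{2}\beta^{2}\xi_{2}(1)$ exactly when $M(\beta)\preceq0$ and strictly smaller once $M(\beta)$ acquires a positive eigenvalue; this gives $\beta_{c}^{(2)}=\beta_{m}^{(2)}$.

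The main obstacle is step (ii): carrying out the direct evaluation of the multi-species spherical $2$-spin free energy and confirming that the second moment method is already tight at this level, without invoking the full Parisi formula. The Gaussian quadratic form nature of $H_{N}^{(2)}$ makes this much more tractable than the original mixture, but the product-sphere constraint and the non-scalar block variance of $\mathcal{M}$ require careful spherical integral / random matrix arguments.
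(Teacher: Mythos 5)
There is a genuine gap, and it sits exactly where you flag it: step (ii). Steps (i) and (iii) are fine --- indeed (i) is precisely the paper's Lemma \ref{lem:RS} (integrate out the independent higher-order part and apply Jensen conditionally), and (iii) is a correct ray-by-ray computation showing that for the quadratic mixture the second-moment threshold is characterized by $M(\beta)\preceq 0$. But after these reductions, what remains to be shown is $\beta_c^{(2)}\leq\beta_m^{(2)}$ for the multi-species spherical $2$-spin model, which is exactly the special case of Proposition \ref{prop:singular} for quadratic mixtures (by (iii), at $\beta_m^{(2)}$ the matrix $M$ is singular, and for $\beta>\beta_m^{(2)}$ it has a positive eigenvalue). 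So your argument reduces the proposition to itself restricted to $|p|=2$, and the claimed resolution --- ``a direct saddle-point / block random matrix analysis'' of $\int_{\prod_s S(N_s)}e^{\beta\bs^T\mathcal{M}\bs/2}$ --- is asserted, not carried out. This is not a routine computation: the product-sphere constraint means one cannot simply diagonalize, and rigorous evaluations of such free energies are known only in special cases (e.g., the single-species spherical SK model, and the bipartite case via Baik--Lee type contour-integral analysis); no general multi-species block version is available to quote, and your sketch gives no mechanism for producing one.

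The paper closes this gap by a completely different mechanism, with no random matrix input. It proves (Proposition \ref{prop:main}) that for any sub-critical $\beta$ and any overlap vector $r$, most configurations $\bs$ on the level set $\{H_N\approx N\beta\xi(1)\}$ have band free energy $\frac1N\log\int_{B(\bs,r,\delta)}e^{\beta H_N}d\mu\approx\frac12\beta^2\xi(1)+f_\beta(r)$ up to an error $\kappa(r^2)$ whose directional derivatives vanish at $r=0$; this rests on a conditional-expectation decomposition of $H_N$ and a Ghirlanda--Guerra/positivity argument. If the matrix in (\ref{eq:mat}) has a non-negative eigenvalue at $\beta<\beta_c$, then for $\beta'\in(\beta,\beta_c)$ one has $\frac{d^2}{d\alpha^2}\big|_{\alpha=0}f_{\beta'}(\alpha r)>0$ along some ray, so $f_{\beta'}(\alpha r)>\kappa(\alpha^2r^2)+2t$ for small $\alpha$, and the band lower bound forces $F_{N,\beta'}>\frac12\beta'^2\xi(1)+t$ with high probability, contradicting (\ref{eq:Jensen}). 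To repair your proof you would need either to import this localization argument or to actually supply the random matrix computation for the general multi-species spherical $2$-spin model; as written, the central inequality is missing.
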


For the spherical single-species mixed $p$-spin models, the Parisi
formula for the limit of the free energy was proved by Talagrand \cite{Talag}
for models with even interactions, and later generalized to arbitrary
mixtures by Chen \cite{Chen}. For the multi-species spherical mixed
$p$-spin models, the Parisi formula was recently proved by Bates
and Sohn \cite{BatesSohn2,BatesSohn1}, assuming that the mixture
polynomial $\xi(x)$ is convex on $[0,1]^{\S}$. For the proof of
the Parisi formula for models with Ising spins, see the works of Talagrand
and Panchenko for the single-species case \cite{Panch,Talag2} and
the work of Panchenko \cite{PanchenkoMulti} for the multi-species
SK model, where it was also assumed that $\xi(x)$ is convex.

For the single-species spherical models, Talagrand also proved in
\cite{Talag} the following characterization of sub-critical inverse-temperatures,
using the Parisi formula: $\beta\leq\beta_{c}$ if and only if
\begin{equation}
\forall r\in[0,1):\quad g_{\beta}(r):=\log(1-r)+r+\beta^{2}\xi(r)\leq0.\label{eq:TalagOpt}
\end{equation}
Using that $\log(1-r)=-\sum_{n\geq1}r^{n}/n$, we have that in the
single-species case 
\begin{equation}
\forall r\in[0,1):\quad f_{\beta}(r)=g_{\beta}(r)+\frac{r^{3}}{3}+\frac{r^{5}}{5}+\frac{r^{7}}{7}+\cdots.\label{eq:g}
\end{equation}
Reassuringly, this is consistent with Theorem \ref{thm:main}. Possibly,
an analogue of the characterization of the high-temperature phase
from \cite{Talag} for multi-species models can be proved using the
results of \cite{BatesSohn2,BatesSohn1}, assuming the convexity of
$\xi(x)$. 

For single-species spherical models, one can deduce Theorem \ref{thm:main}
from the characterization (\ref{eq:TalagOpt}) and (\ref{eq:g}).
Still, even in this setting, it is interesting to understand from
basic principles rather than the Parisi formula when does a basic
tool like the second moment method fails or succeeds. For the multi-species
models, our main result allows one to compute $\beta_{c}$ for a certain
class of models. This result is new, as no analogue for the criterion
from \cite{Talag} is known in this case. In particular, this class
includes models which do not saisfy the assumption that $\xi(x)$
is convex\footnote{For example, if $\Delta_{p}>0$ only when $|p|=2$ then $\xi(x)$ is
a quadratic function, from which one can verify that (\ref{eq:HessMat})
holds. Of course, the coefficients $\Delta_{p}$ with $|p|=2$ can
be chosen so that $\xi(x)$ is not convex. If $f_{\beta}(r)$
has a unique maximum at $r=0$, one can add to such a mixture positive
small coefficients $\Delta_{p}>0$ with $|p|\geq3$ such that 
also after the addition the mixture satisfies (\ref{eq:HessMat}).} as in the proof of the Parisi formula in the multi-species setting
\cite{Barra14,BatesSohn1,PanchenkoMulti}. Finally, one of the main
motivations for this work is that our results are crucial to \cite{TAPmulti2}
where we compute the free energy for pure multi-species spherical
models using the TAP representation developed in \cite{TAPmulti1} (in particular the result we prove in the Appendix).

It is well-known (see Section \ref{sec:PfMainProp}) that for any
$\beta\leq\beta_{c}$ and $\epsilon>0$, with probability going to $1$
as $N\to\infty$,
\[
F_{N,\beta}\approx\frac{1}{N}\log\int_{L_{\beta}(\epsilon)}e^{\beta H_{N}(\bs)}d\mu(\bs),
\]
where we define the subset
\[
L_{\beta}(\epsilon):=\Big\{\bs\in S_{N}:\,\frac{1}{N}H_{N}(\bs)\in(\beta\xi(1)-\epsilon,\beta\xi(1)+\epsilon)\Big\}.
\]
Moreover, with high probability,
\begin{equation}
\frac{1}{N}\log\mu\big(L_{\beta}(\epsilon)\big)\approx\frac{1}{N}\log\E\mu\big(L_{\beta}(\epsilon)\big).\label{eq:Lepsapx}
\end{equation}

For a point $\bs\in S_{N}$, an overlap vector $r\in[-1,1]^{\S}$
and width $\delta>0$, define the subset
\[
B(\bs,r,\delta):=\Big\{\bs'\in S_{N}:\,\forall s\in\S,\,\big|R_{s}(\bs,\bs')-r(s)\big|\leq\delta\Big\}.
\]
We will show using (\ref{eq:Lepsapx}) that for most points $\bs$
in $L_{\beta}(\epsilon)$, the free energy on $B(\bs,r,\delta)$,
namely
\begin{equation}
\frac{1}{N}\log\int_{B(\bs,r,\delta)}e^{\beta H_{N}(\bs')}d\mu(\bs'),\label{eq:Fband-1}
\end{equation}
is close to its conditional expectation given $H_{N}(\bs)$ (see Lemma
\ref{lem:TypCond}). By estimating this conditional expectation, we will prove the following
in Section \ref{sec:PfMainProp}.
\begin{prop}
\label{prop:main} Assume \Asm{} and that $\Delta_p\neq0$ for finitely many $p$. Let $\beta\leq\beta_{c}$, $t>0$. Then for any $r\in[0,1)^{\S}$  and sufficiently small $\delta,\,\epsilon>0$, 
\begin{equation}
\lim_{N\to\infty}\P\bigg(\mu\big(L_{\beta}(\epsilon)\cap A_{r}(t)\big)\geq(1-e^{-Nc})\cdot\mu\big(L_{\beta}(\epsilon)\big)>0
\bigg)=1,\label{eq:LA}
\end{equation}
where $A_{r}(t)\subset S_{N}$ is the set of points $\bs$ such that 
\begin{equation}
\Big|\frac{1}{N}\log\int_{B(\bs, r,\delta)}e^{\beta H_{N}(\bs')}d\mu(\bs')-\frac{1}{2}\beta^{2}\xi(1)-f_{\beta}( r)\Big|<\kappa(r^2 )+t,\label{eq:Fband}
\end{equation}
for some constant $c=c(t,\xi)>0$ and function $\kappa(x)=\kappa(x,\xi,\beta)$ of $x\in[0,1)^\S$ such that $\kappa(0)=0$ and whose directional derivatives at the origin in any direction $x$ are zero 
\begin{equation}
\lim_{\epsilon\to0^+}\frac{\kappa(\epsilon x)}{\epsilon}=0.
\label{eq:kappa0}
\end{equation}
\end{prop}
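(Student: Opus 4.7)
The plan is to reduce Proposition \ref{prop:main} to a Gaussian conditional-expectation computation via Lemma \ref{lem:TypCond}. First, for $\beta \leq \beta_c$, the concentration (\ref{eq:Lepsapx}) combined with a Gaussian tail computation of $\E \mu(L_\beta(\epsilon))$ gives $\frac{1}{N}\log \mu(L_\beta(\epsilon)) \to -\frac{1}{2}\beta^2\xi(1)$ in probability, so in particular $\mu(L_\beta(\epsilon)) > 0$ with probability tending to $1$. Lemma \ref{lem:TypCond} then ensures that for $\mu$-most $\bs \in L_\beta(\epsilon)$, the band free energy in (\ref{eq:Fband-1}) is within $t/2$ of
\[
\frac{1}{N}\log \E\!\left[\int_{B(\bs,r,\delta)} e^{\beta H_N(\bs')}\, d\mu(\bs') \,\Big|\, H_N(\bs)\right],
\]
so it suffices to show that this conditional log-expectation is within $\kappa(r^2) + t/2$ of $\frac{1}{2}\beta^2\xi(1) + f_\beta(r)$.

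For this, by (\ref{eq:cov}), $(H_N(\bs), H_N(\bs'))$ is jointly Gaussian with off-diagonal covariance $N\xi(R(\bs,\bs'))$, so $H_N(\bs')$ given $H_N(\bs)$ is Gaussian with mean $\xi(R(\bs,\bs'))H_N(\bs)/\xi(1)$ and variance $N(\xi(1) - \xi(R(\bs,\bs'))^2/\xi(1))$. Hence
\[
\E\bigl[e^{\beta H_N(\bs')} \mid H_N(\bs)\bigr] = \exp\!\left(\beta\,\frac{\xi(R(\bs,\bs'))}{\xi(1)}\,H_N(\bs) + \frac{N\beta^2}{2}\Bigl(\xi(1) - \frac{\xi(R(\bs,\bs'))^2}{\xi(1)}\Bigr)\right).
\]
For $\bs \in L_\beta(\epsilon)$ we have $H_N(\bs) = N\beta\xi(1) + O(N\epsilon)$, and for $\bs' \in B(\bs,r,\delta)$ continuity of $\xi$ gives $\xi(R(\bs,\bs')) = \xi(r) + O(\delta)$. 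Combined with the volume estimate $\frac{1}{N}\log \mu(B(\bs,r,\delta)) = \frac{1}{2}\sum_s \lambda(s)\log(1-r(s)^2) + o_N(1)$ (from the same coarea calculation that gives (\ref{eq:Z2})), Fubini yields
\[
\frac{1}{N}\log \E\!\left[\int_{B(\bs,r,\delta)} e^{\beta H_N(\bs')}\, d\mu(\bs')\,\Big|\, H_N(\bs)\right] = \frac{1}{2}\beta^2\xi(1) + f_\beta(r) - \frac{\beta^2\xi(r)^2}{2\xi(1)} + O(\epsilon + \delta) + o_N(1).
\]

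To identify the error term as $\kappa(r^2)$, I would set $\kappa(y) := \beta^2 \xi(\sqrt{y})^2/(2\xi(1))$ with $\sqrt{y}$ taken componentwise, so that $\kappa(r^2) = \beta^2\xi(r)^2/(2\xi(1))$ on $[0,1)^\S$. Since $|p| \geq 2$ for every $p \in P$, $\xi(0) = 0$ and hence $\kappa(0) = 0$; moreover $\xi(\sqrt{\epsilon x}) = \sum_p \Delta_p^2\, \epsilon^{|p|/2}\prod_s x(s)^{p(s)/2}$ is $O(\epsilon)$ uniformly on compact subsets of $[0,1)^\S$, so $\kappa(\epsilon x)/\epsilon = O(\epsilon) \to 0$, giving (\ref{eq:kappa0}). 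Choosing $\epsilon, \delta$ small enough that the $O(\epsilon + \delta)$ error above is below $t/2$ then completes the argument.

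The main obstacle is Lemma \ref{lem:TypCond} itself, invoked above as a black box: showing that the band log-integral concentrates around its conditional mean exponentially fast in $N$ for a $(1-e^{-Nc})$-fraction of $\bs \in L_\beta(\epsilon)$. I expect this to proceed via a conditional second-moment computation of $\int_{B(\bs,r,\delta)} e^{\beta H_N(\bs')}\,d\mu(\bs')$ given $H_N(\bs)$, in direct parallel with the unconditional computation leading to (\ref{eq:Z2}); the assumption that only finitely many $\Delta_p$ are nonzero presumably enters there to make the relevant second moment uniformly tractable.
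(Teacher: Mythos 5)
There is a genuine gap, and it sits exactly where the real difficulty of the proposition lies. Lemma \ref{lem:TypCond} compares the band free energy $\phi_{N,\beta}(\bs,r,\delta)=\frac1N\log\int_{B(\bs,r,\delta)}e^{\beta H_N(\bs')}d\mu(\bs')$ with $\varphi_{N,\beta}(\bs,r,\delta)=\E\big(\phi_{N,\beta}(\bs,r,\delta)\,\big|\,H_N(\bs)\big)$, i.e.\ with the \emph{conditional expectation of the logarithm}. Your reduction replaces this by $\frac1N\log\E\big(\int_{B(\bs,r,\delta)}e^{\beta H_N(\bs')}d\mu(\bs')\,\big|\,H_N(\bs)\big)$, the \emph{logarithm of the conditional expectation}. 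By conditional Jensen the latter only upper-bounds the former, and the entire content of the hard direction of the proposition is that this conditional annealed-versus-quenched gap is of order $\kappa(r^2)$. Your Gaussian computation of the conditional annealed quantity is correct, but it proves nothing about $\varphi_{N,\beta}$ beyond an upper bound; in particular it cannot yield the two-sided estimate \eqref{eq:Fband}. A telltale sign is that your main computation never uses $\beta\leq\beta_c$ or Assumption \Asm{}: without them the claim is false, since the conditional law of $H_N$ on the band is itself a mixed $p$-spin model (with mixture $\tilde\xi_r$ in the paper's notation) which could be in an RSB phase, making the quenched band free energy strictly smaller than the annealed one.

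The paper closes this gap through Proposition \ref{prop:E}, proved via Lemmas \ref{lem:band_to_codim}, \ref{lem:Htilde_r} and \ref{lem:codim_mean}: the band is reduced to a codimension-one product of spheres, the conditional field there is identified with a new Hamiltonian $\tilde H_N^r$ of mixture $\tilde\xi_r(x)=\xi((1-r^2)x+r^2)-\xi(r^2)$, and then one must show that the \emph{quenched} free energy of $\tilde H_N^r$ is still $\tfrac12\beta^2\xi(1)+O(\kappa(r^2))$. That last step is the crux: it uses a Ghirlanda--Guerra perturbation, Talagrand's positivity principle (via \cite{BatesSohn1,PanchenkoMulti}), convexity in the interpolation parameter and a one-sided derivative computation at $t=0$, and it is precisely here that $\beta\leq\beta_c$, Assumption \Asm{} and the finiteness of the set of nonzero $\Delta_p$ enter (the latter to keep the interpolated coefficients $\Delta_{p,t,x}$ nonnegative, not, as you surmise, to control a second moment in Lemma \ref{lem:TypCond}, which is in fact proved by conditional Borell--TIS concentration plus Markov). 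None of this machinery, or any substitute for it, appears in your proposal.
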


Obviously, the free energy (\ref{eq:Fband-1}) on $B(\bs, r,\delta)$
lower bounds the total free energy $F_{N,\beta}$. Hence, for $\beta\leq\beta_{c}$,
the proposition in particular gives us a lower bound for the free
energy $F_{N,\beta}$ by using only one point from the set $L_{\beta}(\epsilon)\cap A_{r}(t)$,
with high probability. To prove Proposition \ref{prop:singular},
assuming (\ref{eq:mat}) we will show that if there were some inverse-temperature
$\beta<\beta'<\beta_{c}$, then this lower bound would imply that
$\E F_{N,\beta'}>\frac{1}{2}\beta'^{2}\xi(1)$ in contradiction to
(\ref{eq:Jensen}). 

In Section \ref{sec:mainproofs} we prove Proposition \ref{prop:singular}
and Theorem \ref{thm:main}, assuming Proposition
\ref{prop:main} which is be proved in Section \ref{sec:PfMainProp}
that occupies the rest of the paper.

\section{\label{sec:mainproofs}Proof of the main results assuming Proposition \ref{prop:main}}

In this section we prove Theorem \ref{thm:main} and Proposition \ref{prop:singular},
assuming Proposition \ref{prop:main} which we will prove in Section
\ref{sec:PfMainProp}. 

\subsection{Proof of Proposition \ref{prop:singular}}

We will first prove the proposition assuming that $\Delta_p\neq0$ for only finitely many $p$, in which case we may use Proposition \ref{prop:main}.

Write $f_{\beta}(r)=f_{0}(r)+\beta^{2}\xi(r)$ where 
\[
f_{0}(r)=\frac{1}{2}\sum_{s\in\S}\lambda(s)\log(1-r(s)^{2}).
\]
Note that for any $r$ and real $\alpha$,
\[
\frac{d}{d\alpha}\Big|_{\alpha=0}f_{\beta}(\alpha r)=\frac{d}{d\alpha}\Big|_{\alpha=0}f_{0}(\alpha r)=\frac{d}{d\alpha}\Big|_{\alpha=0}\xi(\alpha r)=0
\]
and
\[
\frac{d^{2}}{d\alpha^{2}}\Big|_{\alpha =0}f_{0}(\alpha r)=-\sum_{s\in\S}\lambda(s)r(s)^{2}<0,
\]
where as usual $(\alpha r)(s)=\alpha r(s)$.  Therefore, for any $\beta<\beta'$,
\begin{equation}
	\frac{d^{2}}{d\alpha^{2}}\Big|_{\alpha=0}f_{\beta}(\alpha r)\geq0 \implies \frac{d^{2}}{d\alpha^{2}}\Big|_{\alpha=0}f_{\beta'}(\alpha r)>0.\label{eq:r}
\end{equation}

Let $\beta$ be some inverse-temperature and assume that the matrix
in (\ref{eq:mat}) has some non-negative eigenvalue. Then there exists
some $r$ with $\|r\|=1$ such that the inequality on the left-hand side of \eqref{eq:r} holds.
From (\ref{eq:mixture}), it is easy to see that we may assume that
this $r$ belongs to $[0,1)^{\S}$. 

Assume towards contradiction that $\beta<\beta_{c}$ and let $\beta'\in(\beta,\beta_{c})$.
Then with the same $r$, 
\[
\frac{d}{d\alpha}\Big|_{\alpha=0}f_{\beta'}(\alpha r)=0\text{\ensuremath{\quad and\quad}}\frac{d^{2}}{d\alpha^{2}}\Big|_{\alpha=0}f_{\beta'}(\alpha r)>0.
\]
Hence, we may choose some small enough $\alpha$ and $t>0$ such that 
\[
f_{\beta'}(\alpha r)>\kappa(\alpha^2r^{2})+2t,
\]
where $\kappa(x)$ is the function from Proposition \ref{prop:main}. 

By the latter proposition, for some small $\delta>0$  with probability
going to $1$ as $N\to\infty$, there exists a point $\bs\in S_{N}$
such that 
\[
F_{N,\beta'}\geq\frac{1}{N}\log\int_{B(\bs,\alpha r,\delta)}e^{\beta'H_{N}(\bs')}d\mu(\bs')>\frac{1}{2}\beta'^{2}\xi(1)+t.
\]

Combined with the well-known concentration of the free energy (see
\cite[Theorem 1.2]{PanchenkoBook}), this contradicts (\ref{eq:Jensen}).
We therefore conclude that $\beta_{c}\leq\beta$.

It remains to prove the proposition in the case where infinitely many $\Delta_p$ are non-zero.
Given some $\xi(x)$ consider the mixture $\bar \xi(X)$ obtained by replacing  by zero all the coefficients $\Delta_p$ whenever $|p|\geq3$. Note that the matrix in \eqref{eq:mat} is determined by the coefficients $\Delta_p$ with $|p|=2$ only. Moreover, by the following lemma, the critical inverse-temperature of $\xi(x)$ is less than or equal to that of $\bar\xi(x)$. From this, the proposition follows also when infinitely many $\Delta_p$ are non-zero.

\begin{lem}
	\label{lem:RS}Suppose that 
	\[
	\xi(x)=\sum_{p:|p|\geq1}\Delta_{p}^{2}\prod_{s\in\S}x(s)^{p(s)},\quad\bar{\xi}(x)=\sum_{p:|p|\geq1}\bar{\Delta}_{p}^{2}\prod_{s\in\S}x(s)^{p(s)}
	\]
	are two mixtures such that $\Delta_{p}^{2}\geq\bar{\Delta}_{p}^{2}$
	for any $p$. Let $F_{N,\beta}$ and $\bar{F}_{N,\beta}$ their corresponding
	free energies. Then 
	\[
	\lim_{N\to\infty}\E F_{N,\beta}=\frac{1}{2}\beta^{2}\xi(1)\implies\lim_{N\to\infty}\E\bar{F}_{N,\beta}=\frac{1}{2}\beta^{2}\bar{\xi}(1).
	\]
\end{lem}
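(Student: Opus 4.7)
The plan is to realize the Hamiltonian $H_N$ corresponding to $\xi$ as an independent Gaussian sum and then integrate out the excess part by Jensen's inequality. Since $\Delta_p^2 \geq \bar\Delta_p^2$, the difference $\tilde\xi := \xi - \bar\xi$ is itself a nonnegative mixture in the sense of \eqref{eq:mixture}, with coefficients $\tilde\Delta_p^2 := \Delta_p^2 - \bar\Delta_p^2 \geq 0$. Let $\bar H_N$ and $\tilde H_N$ be independent centred Gaussian Hamiltonians on $S_N$ corresponding to $\bar\xi$ and $\tilde\xi$, defined on a common probability space. By \eqref{eq:cov}, the sum $\bar H_N + \tilde H_N$ is a centred Gaussian field with covariance $N(\bar\xi+\tilde\xi)(R(\bs,\bs'))=N\xi(R(\bs,\bs'))$, hence coincides in distribution with $H_N$. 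Accordingly, we may assume $H_N = \bar H_N + \tilde H_N$.

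Fix $\bar H_N$. Since $R(\bs,\bs)$ is the all-ones vector, $\tilde H_N(\bs)$ has variance $N\tilde\xi(1)$ for every $\bs \in S_N$. Jensen's inequality applied to the conditional expectation over $\tilde H_N$, combined with Fubini, yields
\begin{align*}
\E\bigl[\log Z_{N,\beta}\,\big|\,\bar H_N\bigr]
&\leq \log \E\!\left[\int_{S_N} e^{\beta \bar H_N(\bs)+\beta \tilde H_N(\bs)}\,d\mu(\bs)\,\Big|\,\bar H_N\right] \\
&= \tfrac12\beta^2 N\tilde\xi(1) + \log \bar Z_{N,\beta},
\end{align*}
where $\bar Z_{N,\beta}$ denotes the partition function built from $\bar H_N$. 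Taking full expectation and dividing by $N$ gives the crucial inequality
\[
\E F_{N,\beta} \;\leq\; \tfrac12\beta^2\bigl(\xi(1)-\bar\xi(1)\bigr) + \E \bar F_{N,\beta}.
\]

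To finish, assume the hypothesis $\lim_N \E F_{N,\beta} = \tfrac12\beta^2\xi(1)$. The previous display then forces $\liminf_N \E \bar F_{N,\beta} \geq \tfrac12\beta^2\bar\xi(1)$, while \eqref{eq:Jensen} applied to $\bar\xi$ supplies the matching upper bound $\E \bar F_{N,\beta} \leq \tfrac12\beta^2\bar\xi(1)$. Hence the limit exists and equals $\tfrac12\beta^2\bar\xi(1)$, as claimed.

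There is no real obstacle in this argument: once the independent decomposition $H_N \overset{d}{=} \bar H_N + \tilde H_N$ is observed, which is immediate from covariance matching via \eqref{eq:cov}, the lemma reduces to a single application of Jensen's inequality. The only point worth emphasizing is that this minor-looking inequality is precisely the monotonicity needed to reduce Proposition \ref{prop:singular} from mixtures with finitely many non-zero $\Delta_p$ (where Proposition \ref{prop:main} applies) to arbitrary mixtures, by truncating to $|p|\leq 2$ without changing either the matrix in \eqref{eq:mat} or the critical inverse-temperature bound sought.
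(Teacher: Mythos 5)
Your proof is correct and follows exactly the paper's own argument: decompose $H_N$ in distribution as an independent sum $\bar H_N+\hat H_N$ with $\hat H_N$ corresponding to the nonnegative mixture $\xi-\bar\xi$, integrate out $\hat H_N$ by conditional Jensen to get $\E F_{N,\beta}\leq \E\bar F_{N,\beta}+\tfrac12\beta^2(\xi(1)-\bar\xi(1))$, and combine with the annealed upper bound \eqref{eq:Jensen}. No discrepancies to note.
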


\begin{proof}
	Define the mixture 
	\[
	\hat{\xi}(x)=\xi(x)-\bar{\xi}(x)=\sum_{p:|p|\geq1}(\Delta_{p}^{2}-\bar{\Delta}_{p}^{2})\prod_{s\in\S}x(s)^{p(s)}.
	\]
	Denote by $H_{N}(\bs)$, $\bar{H}_{N}(\bs)$ and $\hat{H}_{N}(\bs)$
	the Hamiltonians corresponding to $\xi(x)$, $\bar{\xi}(x)$ and $\hat{\xi}(x)$,
	respectively. Define $\bar{H}_{N}(\bs)$ and $\hat{H}_{N}(\bs)$ on
	the same probability space such that they are independent. Note that
	in distribution (as processes) 
	\[
	H_{N}(\bs)=\bar{H}_{N}(\bs)+\hat{H}_{N}(\bs),
	\]
	since the covariance functions and expectation of the Gaussian processes
	in both sides are equal. 
	
	By Jensen's inequality, 
	\begin{align*}
		\E\log\int_{S_{N}}e^{\beta H_{N}(\bs)}d\mu(\bs) & =\E\E\Big(\log\int_{S_{N}}e^{\beta(\bar{H}_{N}(\bs)+\hat{H}_{N}(\bs))}d\mu(\bs)\,\Big|\,(\bar{H}_{N}(\bs))_{\bs\in S_{N}}\Big)\\
		& \leq\E\log\int_{S_{N}}e^{\beta\bar{H}_{N}(\bs)}d\mu(\bs)+\frac{1}{2}N\beta^{2}\hat{\xi}(1).
	\end{align*}
	Hence, if $\lim_{N\to\infty}\E F_{N,\beta}=\frac{1}{2}\beta^{2}\xi(1)$,
	then
	\[
	\liminf_{N\to\infty}\E\bar{F}_{N,\beta}\geq\frac{1}{2}\beta^{2}\xi(1)-\frac{1}{2}\beta^{2}\hat{\xi}(1)=\frac{1}{2}\beta^{2}\bar{\xi}(1).
	\]
	The matching upper bound follows from (\ref{eq:Jensen}).
\end{proof}
\subsection{Proof of Theorem \ref{thm:main}}

Recall that $\beta_{m}\leq\beta_{c}$. If the matrix 
\begin{equation}
\Big(\frac{d}{dr(s)}\frac{d}{dr(t)}f_{\beta_{m}}(0)\Big)_{s,t\in\S}\label{eq:Hessfb}
\end{equation}
is singular, then by Proposition \ref{prop:singular}, $\beta_{c}\leq\beta_{m}$
and thus $\beta_{c}=\beta_{m}$.

Henceforth, assume that the matrix above is regular. We will prove
that in this case $\beta_{m}<\beta_{c}$. As mentioned in the introduction,
the argument we  use is essentially equivalent to the second moment
method with truncation used in \cite{Talagrand2000}. 

Fix some arbitrary $\gamma>0$. For $\beta>0$, consider the random
variable
\[
U_{N,\beta}:=\mu\Big(\Big\{\bs\in S_{N}:\,\Big|\frac{1}{N}H_{N}(\beta)-\beta\xi(1)\Big|<N^{-\gamma}\Big\}\Big).
\]
It is easy to check that
\[
\lim_{N\to\infty}\frac{1}{N}\log\E U_{N,\beta}=-\frac{1}{2}\beta^{2}\xi(1),
\]
and similarly to (\ref{eq:Z2}),
\[
\lim_{N\to\infty}\frac{1}{N}\log\E U_{N,\beta}^{2}=-\beta^{2}\xi(1)+\max_{r\in[0,1)^{\S}}\tilde{f}_{\beta}(r),
\]
where
\[
\tilde{f}_{\beta}(r):=\frac{1}{2}\sum_{s\in\S}\lambda(s)\log(1-r(s)^{2})+\beta^{2}\frac{\xi(1)\xi(r)}{\xi(1)+\xi(r)}.
\]

Define the inverse-temperature
\[
\tilde{\beta}_{m}:=\max\left\{ \beta\geq0:\,\max_{r\in[0,1)^{\S}}\tilde{f}_{\beta}(r)=\tilde{f}_{\beta}(0)=0\right\} .
\]
For any $\beta\leq\tilde{\beta}_{m}$, 
\[
\lim_{N\to\infty}\frac{1}{N}\log\E U_{N,\beta}^{2}=\lim_{N\to\infty}\frac{1}{N}\log\Big((\E U_{N,\beta})^{2}\Big)=-\beta^{2}\xi(1).
\]
From the Paley\textendash Zygmund inequality, for such $\beta$ and
any $\theta\in(0,1)$, with probability not exponentially small in
$N$, $U_{N,\beta}\geq\theta\E U_{N,\beta}$. On this event, $F_{N,\beta}\geq\frac{1}{2}\beta^{2}\xi(1)+o(1)$.
Hence, from the well-known concentration of the free energy and (\ref{eq:Jensen}),
$\tilde{\beta}_{m}\leq\beta_{c}$. 

Note that $\beta_{m}\leq\tilde{\beta}_{m}$, since
\begin{equation}
\begin{aligned}\tilde{f}_{\beta}(r)<f_{\beta}(r) & \iff\xi(r)\neq0,\\
\tilde{f}_{\beta}(r)=f_{\beta}(r) & \iff\xi(r)=0.
\end{aligned}
\label{eq:ftildebd}
\end{equation}

Recall that we assume that the Hessian matrix (\ref{eq:Hessfb}) is
regular.  By the definition of $\beta_{m}$, since 
\begin{equation}
\forall s\in\S,\quad\frac{d}{dr(s)}f_{\beta}(0)=0,\label{eq:grad0}
\end{equation}
all the eigenvalues of the Hessian (\ref{eq:Hessfb}) of $f_{\beta_{m}}(r)$ at $r=0$  are strictly negative. Choose 
some $\alpha>0$ such that they are all less than
$-\alpha$. From continuity, the eigenvalues of the Hessian of $f_{\beta}(r)$
at $r=0$ are less than $-\alpha/2$ for all $\beta$ in some right
neighborhood of $\beta_{m}$. Combined with (\ref{eq:grad0}), for
such $\beta$, this implies that 
\[
\max_{r\in A\cap[0,1)^{\S}}\tilde{f}_{\beta}(r)\leq\max_{r\in A\cap[0,1)^{\S}}f_{\beta}(r)=0,
\]
for some open neighborhood $A$ of $r=0$. 

For such $\beta$ and
some small enough $\delta>0$,
\[
\max_{r\in[0,1)^{\S}\setminus[0,1-\delta]^{\S}}\tilde{f}_{\beta}(r)<0.
\]

Recall the assumption \Asm. Since $[0,1-\delta]^{\S}\setminus A$
is closed, from (\ref{eq:ftildebd}) and the continuity of $f_{\beta}(r)$
and $\tilde{f}_{\beta}(r)$ in $r$ and $\beta$, for some small $c>0$
and $\beta$ close enough to $\beta_{m}$,
\[
\max_{r\in[0,1-\delta]^{\S}\setminus A}\tilde{f}_{\beta}(r)<\max_{r\in[0,1-\delta]^{\S}\setminus A}f_{\beta}(r)-2c
\]
and 
\[
\max_{r\in[0,1-\delta]^{\S}\setminus A}f_{\beta}(r)<\max_{r\in[0,1-\delta]^{\S}\setminus A}f_{\beta_{m}}(r)+c\leq c.
\]
Hence,
\[
\max_{r\in[0,1-\delta]^{\S}\setminus A}\tilde{f}_{\beta}(r)<-c.
\]

Combining the above, we have that $\beta\leq\tilde{\beta}_{m}$ for
any $\beta$ in some small right neighborhood of $\beta_{m}$. Therefore,
$\beta_{m}<\tilde{\beta}_{m}\leq\beta_{c}$, which completes the proof.\qed

\section{\label{sec:PfMainProp}Proof of Proposition \ref{prop:main}}

To prove Proposition \ref{prop:main}, we will need the three auxiliary
results below. The first is an elementary well known result about the
volume of approximate level sets, or entropy for sub-critical $\beta$.
\begin{lem}
\label{lem:vol}If $\beta\leq\beta_{c}$, for any $t>0$, for small enough $\epsilon>0$,
there exists some $c>0$ such that for large $N$,
\[
\P\left(\Big|\frac{1}{N}\log\mu(L_{\beta}(\epsilon))+\frac{1}{2}\beta^{2}\xi(1)\Big|<t\right)>1-e^{-Nc}.
\]
\end{lem}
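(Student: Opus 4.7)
The plan is to prove two-sided exponential concentration of $\mu(L_\beta(\epsilon))$ around its annealed value. Since for any fixed $\bs_0\in S_N$ the random variable $H_N(\bs_0)$ is Gaussian with variance $N\xi(1)$, a direct Gaussian tail estimate gives
\[
\frac{1}{N}\log\E\mu(L_\beta(\epsilon)) = \frac{1}{N}\log\P\!\left(H_N(\bs_0)/N\in(\beta\xi(1)-\epsilon,\beta\xi(1)+\epsilon)\right) = -\tfrac{1}{2}\beta^{2}\xi(1) + o_N(1).
\]
The upper half of the desired inequality is then immediate from Markov applied to $\mu(L_\beta(\epsilon))$: with probability at least $1-e^{-Nt/2}$ one has $\mu(L_\beta(\epsilon))\leq e^{Nt/2}\E\mu(L_\beta(\epsilon))$, which for $N$ large yields $\frac{1}{N}\log\mu(L_\beta(\epsilon))<-\frac{1}{2}\beta^{2}\xi(1)+t$.

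For the lower bound, where the hypothesis $\beta\leq\beta_c$ enters, the strategy is the standard Laplace-type decomposition of $Z_{N,\beta}$ by the energy levels of $H_N/N$. First, I would invoke standard Gaussian concentration for $\sup_{\bs}|H_N(\bs)|/N$ to fix a deterministic $M=M(\xi)$ beyond which the tail has probability $\leq e^{-N}$. Partition $[-M,M]$ into $K=O(M/\epsilon')$ intervals $I_k=(u_k-\epsilon',u_k+\epsilon')$, with discretization width $\epsilon'>0$ to be chosen, and set $L_{u_k}(\epsilon'):=\{\bs : H_N(\bs)/N\in I_k\}$. Applying the Markov bound above at each $u_k$ with slack $s>0$ and union-bounding over $k$, with probability $\geq 1-Ke^{-Ns}$ we have, uniformly in $k$,
\[
\mu(L_{u_k}(\epsilon'))\leq \exp\!\bigl(-Nu_k^{2}/(2\xi(1))+Ns+o(N)\bigr).
\]
Bounding $e^{\beta H_N}\leq e^{N\beta(u_k+\epsilon')}$ on $L_{u_k}(\epsilon')$, summing over $k$, and absorbing the tail $\sup_\bs|H_N|/N>M$ then yields, on the same event,
\[
\tfrac{1}{N}\log Z_{N,\beta} \leq \max_u\bigl(\beta u-\tfrac{u^{2}}{2\xi(1)}\bigr) + \beta\epsilon' + s + o_N(1) = \tfrac{1}{2}\beta^{2}\xi(1) + \beta\epsilon' + s + o_N(1).
\]

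Combining with the matching lower bound $\tfrac{1}{N}\log Z_{N,\beta}\geq\tfrac{1}{2}\beta^{2}\xi(1)-s$, valid with exponentially high probability by (\ref{eq:bcdef}) and the well-known concentration of $F_{N,\beta}$ about its mean \cite[Theorem 1.2]{PanchenkoBook}, forces $Z_{N,\beta}$ to concentrate around the unique maximizer $u^\star=\beta\xi(1)$ of the quadratic. Indeed, since $\beta u-\tfrac{u^{2}}{2\xi(1)} = \tfrac{1}{2}\beta^{2}\xi(1)-(u-\beta\xi(1))^2/(2\xi(1))$, the sum over $k$ with $|u_k-\beta\xi(1)|\geq\eta$ is at most $Ke^{N[\frac{1}{2}\beta^2\xi(1)-\eta^{2}/(2\xi(1))+\beta\epsilon'+s]}$, which is smaller than $\tfrac{1}{2}Z_{N,\beta}$ once $\eta^{2}/(2\xi(1))>2s+\beta\epsilon'+o_N(1)$. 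The remaining contribution, supported on $L_\beta(\eta+\epsilon')$, is at most $e^{N\beta(\beta\xi(1)+\eta+\epsilon')}\mu(L_\beta(\eta+\epsilon'))$, whence
\[
\mu\bigl(L_\beta(\eta+\epsilon')\bigr)\geq \tfrac{1}{2}\exp\!\bigl(-N\bigl[\tfrac{1}{2}\beta^{2}\xi(1)+\beta\eta+\beta\epsilon'+s\bigr]\bigr).
\]
Taking $\eta+\epsilon'$ as the lemma's $\epsilon$ and choosing $\eta,\epsilon',s$ small enough that $\beta\eta+\beta\epsilon'+s<t$ delivers the desired lower bound.

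The main delicate point will be the coupled choice of the parameters $\eta,\epsilon',s$: $\eta$ must be at least of order $\sqrt{s}$ to exponentially dominate the $\eta$-exterior contribution, but $\beta\eta$ must also fit in the $t$-budget. For small $t$ these force $s=o(t^{2})$ and $\eta=O(t/\beta)$, so the resulting constant $c=c(t,\xi)>0$ is correspondingly small; still, the constraints are simultaneously satisfiable, producing the lemma for $\epsilon$ in a $t$-dependent range around $\sqrt{s}$.
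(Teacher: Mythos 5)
Your proposal is correct, and the lower-bound half (the only place $\beta\leq\beta_c$ matters) takes a genuinely different route from the paper's. You decompose $Z_{N,\beta}$ over a discretization of the energy axis, kill every shell at distance $\geq\eta$ from $\beta\xi(1)$ by a union of annealed Markov bounds, and play this against the a priori bound $\frac{1}{N}\log Z_{N,\beta}\geq\frac{1}{2}\beta^{2}\xi(1)-s$ (valid with exponentially high probability by (\ref{eq:bcdef}) and concentration), forcing at least half the mass of $Z_{N,\beta}$ onto $L_{\beta}(\eta+\epsilon')$. The paper instead argues by contradiction at a perturbed inverse-temperature $\beta_{0}$ with $|\beta_{0}-\beta|<\epsilon/\xi(1)$: if $\mu(L_{\beta}(\epsilon))$ were too small with probability $\geq e^{-Nc}$ for every $c$, the restricted partition function over $L_{\beta}(\epsilon)$ would fall below $\frac{1}{2}\beta_{0}^{2}\xi(1)-t/2$ on that event, while a single annealed Markov bound controls the complement (the condition on $\beta_{0}$ puts the maximizer of $-y^{2}/2\xi(1)+\beta_{0}y$ inside the excluded window), so $F_{N,\beta_{0}}$ would undershoot its mean by a constant with not-exponentially-small probability, contradicting concentration; this requires a preliminary reduction from $\beta=\beta_c$ to $\beta<\beta_c$. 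The underlying mechanism --- annealed control of the exterior of $L_\beta$ plus the quenched lower bound on $Z$ at subcritical temperature --- is the same, but your version is more direct (no temperature perturbation, no contradiction, works at $\beta=\beta_c$ directly) at the cost of a discretization and union bound. Two points to tighten: widening the window also inflates $\E\mu(L_{u_k}(\epsilon'))$ by a factor $e^{NM\epsilon'/\xi(1)}$, which is one more $O(\epsilon')$ term for the budget; and the quantifier order in your closing paragraph should be reversed, since the lemma is invoked for every sufficiently small $\epsilon$ --- fix $\epsilon$ first, then take $\epsilon'\ll\eta^{2}$ and $s\lesssim\epsilon^{2}$, which is always possible and merely makes $c$ depend on $\epsilon$ as well as $t$, as the statement permits.
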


Define the random fields
\[
\phi_{N,\beta}(\bs,r,\delta)=\frac{1}{N}\log\int_{B(\bs,r,\delta)}e^{\beta H_{N}(\bs')}d\mu(\bs')
\]
and
\begin{equation}
\varphi_{N,\beta}(\bs,r,\delta)=\E\Big(\phi_{N,\beta}(\bs,r,\delta)\,\Big|\,H_{N}(\bs)\Big),\label{eq:varphi}
\end{equation}
and the random set
\[
D_{N}(r,\delta,t)=\left\{ \bs\in\SN:\,\big|\phi_{N,\beta}(\bs,r,\delta)-\varphi_{N,\beta}(\bs,r,\delta)\big|>t\right\} .
\]

Proposition \ref{prop:main} concerns the volume of points in $L_{\beta}(\epsilon)$
such that $\phi_{N,\beta}(\bs,r,\delta)$ is close to a certain value.
The next lemma shows that $\phi_{N,\beta}(\bs,r,\delta)$ and $\varphi_{N,\beta}(\bs,r,\delta)$
are close to each other on $L_{\beta}(\epsilon)$, up to a subset of small volume. It will allow
us to work with the conditional expectation $\varphi_{N,\beta}(\bs,r,\delta)$,
which only depends on the value of the Hamiltonian at $\bs$.
\begin{lem}
\label{lem:TypCond}Suppose that $\beta\leq\beta_{c}$. For any $r\in(-1,1)^{\S}$
and positive $\delta$  and $t$, for large $N$, 
\[
\P\left(\frac{1}{N}\log\mu\big(L_{\beta}(\epsilon)\cap D_{N}(r,\delta,t)\big)\geq-\frac{1}{2}\beta^{2}\xi(1)+\beta\epsilon-\frac{\epsilon^2}{2\xi(1)}-\frac{t^{2}}{8\xi(1)}\right)\leq e^{-\frac{Nt^{2}}{10\xi(1)}}.
\]
\end{lem}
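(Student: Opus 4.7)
The plan is a first-moment (Markov) argument combined with conditional Gaussian concentration. By Fubini,
\[
\E\mu\big(L_\beta(\epsilon)\cap D_N(r,\delta,t)\big) \;=\; \int_{S_N}\P\big(\bs\in L_\beta(\epsilon)\cap D_N(r,\delta,t)\big)\,d\mu(\bs),
\]
and since $\{\bs\in L_\beta(\epsilon)\}$ is $\sigma(H_N(\bs))$-measurable, the natural step is to condition on $H_N(\bs)$ and split the integrand into an unconditional Gaussian tail factor for $H_N(\bs)$ and a conditional concentration factor for $\phi_{N,\beta}-\varphi_{N,\beta}$.

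The first factor, $\P(\bs\in L_\beta(\epsilon))$, is directly computed from the fact that by \eqref{eq:cov} $H_N(\bs)$ is centered Gaussian with variance $N\xi(1)$: a standard one-sided tail estimate against the threshold $N(\beta\xi(1)-\epsilon)$ yields
\[
\P(\bs\in L_\beta(\epsilon)) \;\leq\; \exp\!\Big(N\Big[-\tfrac{1}{2}\beta^{2}\xi(1)+\beta\epsilon-\tfrac{\epsilon^{2}}{2\xi(1)}\Big]\Big),
\]
which is precisely the $\bs$-independent part of the target exponent.

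For the conditional factor, the key observation is that, conditionally on $H_N(\bs)$, the process $(H_N(\bs'))_{\bs'\in B(\bs,r,\delta)}$ remains Gaussian, with conditional covariance computable from \eqref{eq:cov} via the Gaussian regression formula and pointwise conditional variance bounded by $N\xi(1)$. The quantity $N\phi_{N,\beta}(\bs,r,\delta)$ is then the log-Laplace transform of the conditional Gaussian field $\beta H_N(\cdot)$ against $\mu$ restricted to $B(\bs,r,\delta)$, and its conditional mean is exactly $N\varphi_{N,\beta}(\bs,r,\delta)$ by the definition \eqref{eq:varphi}. The standard Borell-type concentration inequality for log-integrals of centered Gaussian fields (equivalently, the fact that such a log-integral is sub-Gaussian with variance proxy equal to the maximum pointwise variance of the underlying field) then yields a conditional tail of the form
\[
\P\big(|\phi_{N,\beta}(\bs,r,\delta)-\varphi_{N,\beta}(\bs,r,\delta)|>t \,\big|\, H_N(\bs)\big) \;\leq\; 2\exp\!\Big(-\tfrac{cNt^{2}}{\xi(1)}\Big)
\]
for a suitable constant $c$. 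Multiplying this by the bound on $\P(\bs\in L_\beta(\epsilon))$, integrating over $\bs$, and finally applying Markov's inequality with the threshold $\exp(N[-\tfrac12\beta^2\xi(1)+\beta\epsilon-\tfrac{\epsilon^2}{2\xi(1)}-\tfrac{t^2}{8\xi(1)}])$ gives the claim.

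The main obstacle is matching the specific numerical constants $\tfrac{1}{8}$ and $\tfrac{1}{10}$ appearing in the statement. This forces one to use the sharp form of the Gaussian log-integral concentration, with the variance proxy tuned to the actual maximum conditional variance of $\beta H_N(\bs')$ on $B(\bs,r,\delta)$ (which is bounded by $N(\xi(1)-\xi(R(\bs,\bs'))^2/\xi(1))$), and to carefully apportion the total $t^{2}$ budget between the concentration tail, which must absorb $\tfrac{t^2}{10\xi(1)}$, and the Markov threshold, which absorbs $\tfrac{t^2}{8\xi(1)}$. The remaining ingredients --- the Gaussian tail computation for $H_N(\bs)$, the conditional covariance identity, and Markov's inequality --- are routine.
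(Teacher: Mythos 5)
Your proposal is correct and follows essentially the same route as the paper: Fubini plus conditioning on $H_N(\bs)$, the Gaussian tail bound for $\P(\bs\in L_\beta(\epsilon))$, conditional concentration of the band free energy with variance proxy $N\xi(1)$, and Markov's inequality. The constant-matching you flag as the main obstacle is not actually delicate --- the crude bound of the conditional variance by $N\xi(1)$ gives a conditional tail $2e^{-Nt^2/(4\xi(1))}$, Markov at threshold $t^2/(8\xi(1))$ leaves $e^{-Nt^2/(8\xi(1))+o(N)}$, and the slack down to $t^2/(10\xi(1))$ simply absorbs the $o(N)$ corrections for large $N$.
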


The main ingredient in the proof of Proposition \ref{prop:main} is
the following estimate on the conditional expectation $\varphi_{N,\beta}(\bs,r,\delta)$.
\begin{prop}
\label{prop:E}Assume \Asm{} and that $\Delta_p\neq0$ for finitely many values of $p$ and $\beta\leq\beta_{c}$.  Then, for
any $r\in[0,1)^{\S}$, almost surely,
\begin{equation}
\begin{aligned}
	\limsup_{N\to\infty}\sup_{\bs\in S_{N}} & \bigg|\frac{\beta}{N}\frac{\xi(r)}{\xi(1)}H_{N}(\bs)-\beta^{2}\xi(r)+f_{\beta}(r)+\frac12\beta^{2}\xi(1)-\varphi_{N,\beta}(\bs,r,\delta)\bigg|\\
 & \leq\beta\delta c_{\xi}+\kappa(r^{2}),
\end{aligned}
\label{eq:PropE}
\end{equation}
for some constant $c_{\xi}$  depending only on $\xi$ and function $\kappa(x)=\kappa(x,\xi,\beta)$ as in Proposition \ref{prop:main}. 
\end{prop}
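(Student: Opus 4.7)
The plan is to reduce the proposition, via Gaussian conditioning, to a two-sided estimate on a single $\bs$-independent quantity $\Psi_{N}(r,\delta)$, establish the upper bound by Jensen, and obtain the matching lower bound by bootstrapping from the identity $\E F_{N,\beta}\to\tfrac{1}{2}\beta^{2}\xi(1)$. I first condition on $H_{N}(\bs)$ and split
\[
H_{N}(\bs') = \frac{\xi(R(\bs,\bs'))}{\xi(1)}H_{N}(\bs) + \tilde H_{N}(\bs'),
\]
where $\tilde H_{N}$ is centered Gaussian independent of $H_{N}(\bs)$. Since only finitely many $\Delta_{p}$ are nonzero, $\xi$ is Lipschitz on $[-1,1]^{\S}$ with some constant $c_{\xi}$, so on $B(\bs,r,\delta)$ the scalar factor $\xi(R(\bs,\bs'))/\xi(1)$ agrees with $\xi(r)/\xi(1)$ up to $O(\delta)$. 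Factoring this nearly-constant prefactor out of the integral and using Borell-TIS to bound $\sup_{\bs}|H_{N}(\bs)|/N$ uniformly by a deterministic constant (with overwhelming probability) yields
\[
\Bigl|\varphi_{N,\beta}(\bs,r,\delta) - \frac{\beta\xi(r)}{\xi(1)}\frac{H_{N}(\bs)}{N} - \Psi_{N}(r,\delta)\Bigr| \leq \beta\delta c_{\xi},
\]
with $\Psi_{N}(r,\delta) := \tfrac{1}{N}\E\log\int_{B(\bs,r,\delta)}e^{\beta\tilde H_{N}(\bs')}d\mu(\bs')$ independent of $\bs$ by the species-preserving orthogonal invariance of the multi-species Gaussian field. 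The claim then reduces to showing
\[
\Psi_{N}(r,\delta) = \tfrac{1}{2}\sum_{s}\lambda(s)\log(1-r(s)^{2}) + \tfrac{1}{2}\beta^{2}\xi(1) \pm \bigl(\kappa(r^{2}) + O(\delta)\bigr).
\]

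The upper half follows from Jensen: since $\E\tilde H_{N}(\bs')^{2}/N \leq \xi(1) - \xi(r)^{2}/\xi(1) + O(\delta)$ on $B$,
\[
\Psi_{N}(r,\delta) \leq \tfrac{1}{N}\log\mu(B) + \tfrac{1}{2}\beta^{2}\bigl(\xi(1)-\xi(r)^{2}/\xi(1)\bigr) + O(\delta),
\]
and a coarea computation gives $\tfrac{1}{N}\log\mu(B) = \tfrac{1}{2}\sum_{s}\lambda(s)\log(1-r(s)^{2}) + O(\delta) + o_{N}(1)$. Since each $|p|\geq 2$ forces $\xi(r)=O(|r|^{2})$, the slack $\tfrac{1}{2}\beta^{2}\xi(r)^{2}/\xi(1)$ is $O(|r|^{4})$, which is $o(|r|^{2})$ and fits inside a valid $\kappa(r^{2})$.

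The matching lower bound is the main obstacle, and is where the hypothesis $\beta\leq\beta_{c}$ is needed. Parametrizing $B(\bs,r,0)$ by the orthogonal complement of $\bs$, the field $\tilde H_{N}$ becomes a multi-species mixed $p$-spin Hamiltonian on a configuration space of dimension $N-|\S|$ with modified mixture $\xi_{\mathrm{new}}(Q) = \xi(r^{2}+(1-r^{2})Q) - \xi(r)^{2}/\xi(1)$, plus a random constant and (for $r\neq 0$) a linear-in-$Q$ external-field contribution. Showing the required bound is equivalent to Jensen being approximately tight for this reduced model up to $o(|r|^{2})$ slack captured in $\kappa$. I would apply the second moment method to $Z'_{N}:=\int_{B}e^{\beta\tilde H_{N}}d\mu$: computing $\tfrac{1}{N}\log[\E(Z'_{N})^{2}/(\E Z'_{N})^{2}]$ via Laplace asymptotics over the reduced overlap $Q$ yields a variational problem whose maximum controls the tightness gap. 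The hypothesis $\beta\leq\beta_{c}$ is injected by bootstrapping from the global identity $\tfrac{1}{N}\E\log Z_{N,\beta}\to\tfrac{1}{2}\beta^{2}\xi(1)$: combining Lemmas \ref{lem:vol} and \ref{lem:TypCond} identifies $\Psi_{N}(r,\delta)$ with the typical value of $\phi_{N,\beta}(\bs,r,\delta)-\tfrac{\beta\xi(r)}{\xi(1)}\tfrac{H_{N}(\bs)}{N}$ over $\bs\in L_{\beta}(\epsilon)$, and the global identity for $F_{N,\beta}$ controls this via band decomposition. The hardest technical point is securing the $o(|r|^{2})$ control uniformly in $\bs$, since a direct Lemma-\ref{lem:RS}-style coefficient comparison of $\xi_{\mathrm{new}}$ against $\xi$ fails (the coefficients of $\xi_{\mathrm{new}}$ include a linear-in-$Q$ piece absent from $\xi$), making the external-field contribution and its interaction with the band entropy and Laplace asymptotics the key challenge.
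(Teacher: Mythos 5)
The first half of your argument --- conditioning on $H_{N}(\bs)$, splitting off the nearly constant factor $\xi(R(\bs,\bs'))/\xi(1)$ on the band, extracting the band entropy $\tfrac12\sum_{s}\lambda(s)\log(1-r(s)^{2})$, and getting the upper bound on $\Psi_{N}$ from Jensen --- matches the paper's Lemmas \ref{lem:band_to_codim} and \ref{lem:Htilde_r} and the easy direction of Lemma \ref{lem:codim_mean}. The gap is the lower bound, which is the real content of the proposition, and both routes you propose for it fail. (1) A second moment computation for $Z'_{N}=\int_{B}e^{\beta\tilde H_{N}}d\mu$ only certifies tightness of Jensen up to the second-moment threshold of the reduced mixture $\tilde\xi_{r}$; there is no reason that threshold exceeds $\beta_{c}$, and at $r=0$ the reduced model \emph{is} the original one, so if this route worked for all $\beta\leq\beta_{c}$ the paper's main theorem would be vacuous ($\beta_{m}=\beta_{c}$ always). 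Moreover $\tilde\xi_{r}$ contains $|p|=1$ terms (an external field), for which the plain second moment method is known to break down. (2) The ``bootstrapping via band decomposition'' only goes one way: the identity $F_{N,\beta}\to\tfrac12\beta^{2}\xi(1)$ upper-bounds every band free energy, but the decomposition of $F_{N,\beta}$ over bands is dominated by the band at $r=0$, so it yields no lower bound on the band at a fixed $r\neq0$. (Lemmas \ref{lem:vol} and \ref{lem:TypCond} are used in the paper to deduce Proposition \ref{prop:main} \emph{from} Proposition \ref{prop:E}, not to prove the latter.)

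The paper's actual mechanism for the lower bound is the idea you are missing: set $r=\sqrt{tx}$ and note that, after replacing the coefficients $\Delta_{p,\sqrt{tx}}$ by their affine-in-$t$ linearizations and adding a small Ghirlanda--Guerra perturbation, the free energy $F_{N,\beta}(t)$ of the mixture $\tilde\xi_{\sqrt{tx}}$ is convex in $t$. One then computes its right derivative at $t=0$ by Gaussian integration by parts, obtaining $\tfrac12\beta^{2}\big(\eta_{x}(1)-\lim_{N}\E\langle\eta_{x}(R(\bs^{1},\bs^{2}))\rangle\big)$ with $\eta_{x}(1)=\eta_{x}(0)=0$; this vanishes because for $\beta\leq\beta_{c}$ the overlap concentrates at $0$, which is exactly where $\beta\leq\beta_{c}$ (via convexity in $\beta$ and differentiation of the limiting free energy), the positivity of overlaps supplied by the perturbation, and Assumption $\Asm$ all enter. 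Convexity in $t$ together with a vanishing right derivative at $t=0$ is precisely what produces a $\kappa$ with $\kappa(tx)/t\to0$, i.e.\ the $o(|r|^{2})$ control that you correctly identify as the crux but do not obtain.
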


Next we will prove Proposition \ref{prop:main}, assuming the three
results above. They will be proved in the following subsections. Let
$t,\epsilon,\delta>0$, $r\in[0,1)^{\S}$ and $\alpha\in(0,1)$. Set $c=\frac{1}{3}\frac{(t/2)^{2}}{8\xi(1)}$.
By Lemmas \ref{lem:vol} and \ref{lem:TypCond}, if $\epsilon>0$ is small enough, for some $a>0$ and
large enough $N$, with probability at least $1-e^{-Na}$,
\begin{align*}
\frac{1}{N}\log\mu(L_{\beta}(\epsilon)) & >-\frac{1}{2}\beta^{2}\xi(1)-c,\\
\frac{1}{N}\log\mu\big(L_{\beta}(\epsilon)\cap D_{N}( r,\delta,t/2)\big) & <-\frac{1}{2}\beta^{2}\xi(1)-3c.
\end{align*}
On this event,
\[
\frac{\mu\big(L_{\beta}(\epsilon)\setminus D_{N}( r,\delta,t/2)\big)}{\mu\big(L_{\beta}(\epsilon)\big)}>1-e^{-Nc}.
\]

On $L_{\beta}(\epsilon)$, 
\[
\bigg|\frac{\beta}{N}\frac{\xi(r)}{\xi(1)}H_{N}(\bs)-\beta^{2}\xi(r)\bigg|\leq\beta\epsilon.
\]
By Proposition \ref{prop:E}, a.s., 
\begin{align*}
\limsup_{N\to\infty}\sup_{\bs\in S_{N}\setminus D_{N}(r,\delta,t/2)} & \bigg|\frac{\beta}{N}\frac{\xi(r)}{\xi(1)}H_{N}(\bs)-\beta^{2}\xi(r)+f_{\beta}(r)+\frac12\beta^{2}\xi(1)\\
 & -\phi_{N,\beta}(\bs,r,\delta)\bigg|\leq\beta\delta c_{\xi}+\kappa(r^{2})+\frac{1}{2}t,
\end{align*}
for $c_{\xi}$ and $\kappa(x)$ as in the proposition.

Hence, a.s.,
\begin{align*}
\limsup_{N\to\infty}\sup_{\bs\in L_{\beta}(\epsilon)\setminus D_{N}(r,\delta,t/2)} & \Big|f_{\beta}(r)+\frac{1}{2}\beta^{2}\xi(1)-\phi_{N,\beta}(\bs,r,\delta)\Big|\\
 & <\beta\delta c_{\xi}+\beta^{2}\kappa(r^{2})+\frac{1}{2}t+\beta\epsilon,
\end{align*}
and thus, for small enough $\epsilon$ and $\delta$, 
\[
\lim_{N\to\infty}\P\Big(L_{\beta}(\epsilon)\setminus D_{N}(r,\delta,t/2)\subset A_{r}(t)\Big)=1.
\]
This completes the proof of Proposition \ref{prop:main}. It remains
to prove the three results above.

\subsection{Proof of Lemma \ref{lem:vol}}

Suppose that $\beta\leq\beta_{c}$ and let $\epsilon,t>0$ be arbitrary
numbers. Using Fubini's theorem, one sees that for small enough $\epsilon>0$,
\begin{equation}
\frac{1}{N}\log\E\mu(L_{\beta}(\epsilon))=-\frac{1}{2}\beta^{2}\xi(1)+\frac{t}{2}+o(1).\label{eq:Evol}
\end{equation}
By Markov's inequality,
\[
\P\left(\frac{1}{N}\log\mu(L_{\beta}(\epsilon))>-\frac{1}{2}\beta^{2}\xi(1)+t\right)<e^{-\frac{Nt}{2}+o(N)}.
\]

It remains to show that for some $c>0$ and large $N$, 
\[
\P\left(\frac{1}{N}\log\mu(L_{\beta}(\epsilon))<-\frac{1}{2}\beta^{2}\xi(1)-t\right)<e^{-Nc}.
\]
For $\beta<\beta_c$ close enough to $\beta_c$,
\[
\P\left(\frac{1}{N}\log\mu(L_{\beta^c}(\epsilon))<-\frac{1}{2}\beta_c^{2}\xi(1)-t\right)<\P\left(\frac{1}{N}\log\mu(L_{\beta}(\epsilon/2))<-\frac{1}{2}\beta^{2}\xi(1)-t\right),
\]
hence it will be enough to prove the inequality for $\beta<\beta_c$.

Assume towards contradiction that for any $c>0$, for some large as we wish $N$, 
\begin{equation}
	\label{eq:bd2203}
\P\left(\frac{1}{N}\log\mu(L_{\beta}(\epsilon))<-\frac{1}{2}\beta^{2}\xi(1)-t\right)>e^{-Nc}.
\end{equation}
Let $\beta_0$ be some inverse-temperature. On the event in \eqref{eq:bd2203},
\begin{equation}
	\label{eq:bd2203-02}
\begin{aligned}
	\frac{1}{N}\log\int_{  L_{\beta}(\epsilon)}e^{\beta_{0}H_{N}(\bs)}d\mu(\bs)  &\leq -\frac12 \beta^2\xi(1)-t+\beta_0(\beta\xi(1)+\epsilon)\\
	&<\frac12\beta_0^2\xi(1)-\frac{t}{2},
\end{aligned}
\end{equation}
where the second inequality holds if we assume that $\beta_0$ is close enough to $\beta$ and $\epsilon$ is sufficiently small.

Assume in addition  that $\beta_0\in(\beta-\epsilon/\xi(1),\beta+\epsilon/\xi(1))$. Then,
\begin{align*}
	& \lim_{N\to\infty}\frac{1}{N}\log\E\int_{S_{N}\setminus  L_{\beta}(\epsilon)}e^{\beta_{0}H_{N}(\bs)}d\mu(\bs) =\max_{y:|y-\beta\xi(1)|\geq \epsilon}\left\{ -\frac{y^{2}}{2\xi(1)}+\beta_{0}y\right\} < \frac{1}{2}\beta_{0}^{2}\xi(1).
\end{align*}
By Markov's inequality, for some $a,b>0$ which depend on $\beta$, $\beta_0$ , $\epsilon$ and $\xi(1)$, for large $N$,
\begin{equation}
\label{eq:bd2203-03}
\P\Big(\frac{1}{N}\log\int_{S_{N}\setminus  L_{\beta}(\epsilon)}e^{\beta_{0}H_{N}(\bs)}d\mu(\bs) > \frac{1}{2}\beta_{0}^{2}\xi(1)-a\Big)<e^{-Nb}.
\end{equation}

Since we assumed that $\beta<\beta_c$, we may also assume that $\beta_0<\beta_c$, and thus $\lim_{N\to\infty}\E F_{N,\beta_0} = \frac12\beta_0^2\xi(1)$.

Combining the above, we have that for some $N$ as large as we wish, both \eqref{eq:bd2203-02} and the complement of the event in \eqref{eq:bd2203-03} occur simultaneously with probability at least $e^{-Nc}-e^{-Nb}$.
For any small $c>0$, for such $N$ we therefore have that
\begin{equation}
\label{eq:2203-04}
\P\left(
F_{N,\beta_0}<\E F_{N,\beta_0}-\min\{t/4,a/2\}
\right)>\frac12 e^{-Nc},
\end{equation}
in contradiction to the well-known concentration of the free energy (see e.g. \cite[Theorem 1.2]{PanchenkoBook}). This completes the proof.
\qed

\subsection{Proof of Lemma \ref{lem:TypCond}}

Let $\bs\in S_{N}$ be an arbitrary point. Conditional on $H_{N}(\bs)$
the value at $\bs$, the Hamiltonian is a Gaussian field whose variance
is bounded by the variance before conditioning $N\xi(1)$. Hence,
from the well-known concentration of the free energy, for any $x\in\R$,
\[
\P\Big(\bs\in D_{N}(r,\delta,t))\,\Big|\,H_{N}(\bs)=x\Big)\leq2e^{-\frac{Nt^{2}}{4\xi(1)}}.
\]

Therefore, from Fubini's Theorem, as $N\to\infty$,
\begin{align*}
 & \E\mu\Big(D_{N}(r,\delta,t)\cap L_{\beta}(\epsilon)\Big)=\P\Big(\bs\in D_{N}(r,\delta,t)\cap L_{\beta}(\epsilon)\Big)\\
 & \leq2e^{-\frac{Nt^{2}}{4\xi(1)}}\P\Big(\bs\in L_{\beta}(\epsilon)\Big)=\exp\left(-N\left(\frac{t^{2}}{4\xi(1)}+\frac{1}{2}\beta^{2}\xi(1)-\beta\epsilon+\frac{\epsilon^2}{2\xi(1)}\right)+o(N)\right).
\end{align*}
By Markov's inequality, 
\[
\pushQED{\qed}\P\left(\mu\big(D_{N}(r,\delta,t)\cap L_{\beta}(\epsilon)\big)\geq e^{-N\left(\frac{t^{2}}{8\xi(1)}+\frac{1}{2}\beta^{2}\xi(1)-\beta\epsilon+\frac{\epsilon^2}{2\xi(1)}\right)}\right)\leq e^{-\frac{Nt^{2}}{8\xi(1)}+o(N)}.\qedhere\popQED
\]

\subsection{Proof of Proposition \ref{prop:E}}

The proof will be based on the three lemmas below which will be proved
in the next subsections. Define
\[
B(\bs,r):=B(\bs,r,0)=\Big\{\bs'\in S_{N}:\,\forall s\in\S,\,R_{s}(\bs,\bs')=r(s)\Big\}.
\]
Note that we may identify $B(\bs,r)$ with the product spheres, one
for each $s\in\S$, of codimension 1 in $S(N_{s})$. Endow each of
those spheres with the uniform probability measure and let $\nu=\nu_{\bs,r}$
denote the product measure on $B(\bs,r)$. Similarly to $\varphi_{N,\beta}(\bs,r,\delta)$,
(see (\ref{eq:varphi})) define 
\[
\varphi_{N,\beta}(\bs,r)=\E\Big(\frac{1}{N}\log\int_{B(\bs,r)}e^{\beta H_{N}(\bs')}d\nu(\bs')\,\Big|\,H_{N}(\bs)\Big).
\]

\begin{lem}
\label{lem:band_to_codim}Let $\beta\geq0$ and $r\in[0,1)^{\S}$.
For large enough $N$, almost surely,
\begin{equation}
\sup_{\bs\in S_{N}}\Big|\varphi_{N,\beta}(\bs,r,\delta)-\varphi_{N,\beta}(\bs,r)-\frac{1}{2}\sum_{s\in\S}\lambda(s)\log(1-r(s)^{2})\Big|\leq\delta c_{\xi}\beta\Big(\max_{\bs\in S_{N}}\frac{|H_{N}(\bs)|}{N}+1\Big),\label{eq:phihat_codimapx}
\end{equation}
where $c_{\xi}>0$ is a constant that only depends on $\xi$.
\end{lem}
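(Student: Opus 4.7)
The plan is to disintegrate $\mu$ on the band $B(\bs,r,\delta)$ via a coarea formula and then control how the codimension free energy varies with the overlap. Adapting the coarea computation leading to \eqref{eq:Z2} to a general base point $\bs$ yields
\[
\int_{B(\bs,r,\delta)} f(\bs')\, d\mu(\bs') = \int_{[r-\delta,r+\delta]^{\S}} J_N(r') \int_{B(\bs,r')} f(\bs')\, d\nu_{\bs,r'}(\bs')\, dr',
\]
with $J_N(r'):=\prod_{s\in\S} \frac{\omega_{N_s-1}}{\omega_{N_s}}(1-r'(s)^2)^{(N_s-3)/2}$. Applying this to $f=e^{\beta H_N}$ and taking $\tfrac{1}{N}\log$ represents $\phi_{N,\beta}(\bs,r,\delta)$ as a log-integral of $J_N(r')\cdot\exp\bigl(N\tilde\phi_{N,\beta}(\bs,r')\bigr)$ over $r'$, where $\tilde\phi_{N,\beta}(\bs,r'):=\frac{1}{N}\log\int_{B(\bs,r')}e^{\beta H_N(\bs')}d\nu_{\bs,r'}(\bs')$ is the codimension-$|\S|$ analogue of $\phi_{N,\beta}$. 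A direct estimate gives $\tfrac{1}{N}\log J_N(r')=\tfrac{1}{2}\sum_s\lambda(s)\log(1-r(s)^2)+O(\delta)+o_N(1)$ uniformly for $r'\in[r-\delta,r+\delta]^{\S}$, which is the source of the entropy term.

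The crux is then the uniform Lipschitz estimate $\sup_{\bs,r'}\bigl|\tilde\phi_{N,\beta}(\bs,r')-\tilde\phi_{N,\beta}(\bs,r)\bigr|\le c_\xi\delta\beta\max_{\bs''\in S_N}|H_N(\bs'')|/N$. I would obtain it via the diffeomorphism $T_{r'}\colon B(\bs,r)\to B(\bs,r')$ defined species-wise by $(T_{r'}\bs')|_{I_s}:=\alpha_s\bs'|_{I_s}+\gamma_s\bs|_{I_s}$ with $\alpha_s=\sqrt{(1-r'(s)^2)/(1-r(s)^2)}$ and $\gamma_s=r'(s)-\alpha_s r(s)$; a direct check shows $T_{r'}$ pushes $\nu_{\bs,r}$ forward to $\nu_{\bs,r'}$, so $e^{N\tilde\phi_{N,\beta}(\bs,r')}=\int_{B(\bs,r)} e^{\beta H_N(T_{r'}\bs')}\,d\nu_{\bs,r}(\bs')$. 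The problem reduces to the pointwise bound $|H_N(T_{r'}\bs')-H_N(\bs')|\le c_\xi\delta\max_{\bs''\in S_N}|H_N(\bs'')|$ uniformly in $\bs,\bs'$. Writing $H_N=\sum_p H_{N,p}$ in its $p$-homogeneous pieces and expanding each monomial using $(T_{r'}\bs')_i=\alpha_{s(i)}\sigma'_i+\gamma_{s(i)}\sigma_i$ produces the principal term $\prod_s\alpha_s^{p(s)}H_{N,p}(\bs')$ plus $2^{|p|}-1$ mixed terms in which some factors $\sigma'_{i_j}$ have been replaced by $\gamma_{s(i_j)}\sigma_{i_j}$. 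Each mixed term is a partially evaluated multilinear form and is bounded pointwise by a degree-dependent multiple of $\max_{\bs''\in S_N}|H_{N,p}(\bs'')|$ via the polarization identity. Since $\alpha_s=1+O(\delta)$ and $\gamma_s=O(\delta)$ uniformly on $[0,\max_s r(s)+\delta]$, and $\xi(1+\epsilon)<\infty$ ensures the contributions from different $p$ can be summed with constants depending only on $\xi$, the Lipschitz estimate follows.

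With these two ingredients, sandwiching the coarea integral between $(2\delta)^{|\S|}$ times the supremum and infimum of its integrand and dividing by $N$ yields $\bigl|\phi_{N,\beta}(\bs,r,\delta)-\tilde\phi_{N,\beta}(\bs,r)-\tfrac{1}{2}\sum_s\lambda(s)\log(1-r(s)^2)\bigr|\le c_\xi\delta\beta(\max_{\bs''\in S_N}|H_N(\bs'')|/N+1)+o_N(1)$ uniformly in $\bs$; the negligible prefactor $\tfrac{|\S|\log(2\delta)}{N}$ and the deterministic $O(\delta)$ contribution of the density factor are absorbed into the $o_N(1)$ and the additive $+1$ term, respectively. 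Taking conditional expectations given $H_N(\bs)$ then transfers the bound to $\varphi_{N,\beta}$ in place of $\phi_{N,\beta}$ and $\tilde\phi_{N,\beta}$; here one exploits the orthogonal decomposition $H_N(\bs')=\tfrac{\xi(R(\bs,\bs'))}{\xi(1)}H_N(\bs)+\tilde H_N(\bs')$ with $\tilde H_N$ independent of $H_N(\bs)$, together with the rotational invariance of its law, so that the conditional expectation of the random error remains of the same order. The main technical obstacle is the polarization step: controlling the mixed multilinear forms uniformly in $N$ and absorbing all constants into a single $c_\xi$ when summing over the countably many homogeneous pieces $H_{N,p}$ requires careful use of the analyticity condition $\xi(1+\epsilon)<\infty$.
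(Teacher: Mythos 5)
Your overall skeleton (coarea disintegration of the band into codimension-$|\S|$ slices, an entropy estimate on the Jacobian $J_N$, and a Lipschitz bound in the slice parameter $r'$) is parallel to the paper's, and your map $T_{r'}$ pushing $\nu_{\bs,r}$ to $\nu_{\bs,r'}$ is correct. But the step you yourself identify as the crux --- the pointwise bound $|H_N(T_{r'}\bs')-H_N(\bs')|\le c_\xi\delta\max_{\bs''}|H_N(\bs'')|$ via polarization --- does not close as stated, for two reasons. First, the polarization inequality bounds the symmetrized degree-$k$ multilinear form on mixed arguments by $\tfrac{k^k}{k!}\sim e^{k}$ times its supremum on the diagonal; after multiplying by the $\binom{k}{m}\gamma^{k-m}$ combinatorics of the binomial expansion, the degree-$k$ contribution carries a constant of exponential order in $k=|p|$, and the standing assumption $\xi(1+\epsilon)<\infty$ only gives $\Delta_p\lesssim(1+\epsilon)^{-|p|/2}$, which is not enough to sum these constants over $p$ (the lemma is stated without the finitely-many-$p$ hypothesis). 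Second, even degree by degree, polarization controls each mixed term by $\max_{\bs''}|H_{N,p}(\bs'')|$, so what you obtain is a bound by $\sum_p C_{|p|}\max_{\bs''}|H_{N,p}(\bs'')|$, which is a different (and typically much larger) random variable than $\max_{\bs''}|H_N(\bs'')|$; the claimed pointwise inequality does not follow. The paper avoids both problems by not working pointwise at all: it invokes the high-probability uniform Lipschitz estimate $\tfrac1N|H_N(\bs)-H_N(\bs')|\le L\max_s\sqrt{R_s(\bs-\bs',\bs-\bs')}$ from \cite[Lemma 25]{TAPmulti1} (a gradient/Borell--TIS estimate on the whole field, with $L=L(\xi)$), combined with concentration of the free energy to pass to expectations.

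A second, smaller gap is the deferred conditioning. You prove an almost-sure bound for $\phi_{N,\beta}$ with the random error $\delta c_\xi\beta\max_{\bs''}|H_N(\bs'')|/N$ and then ``take conditional expectations given $H_N(\bs)$''; but $\E\big[\max_{\bs''}|H_N(\bs'')|/N\,\big|\,H_N(\bs)\big]$ is not dominated by $\max_{\bs''}|H_N(\bs'')|/N$ pointwise, so the bound does not transfer verbatim to $\varphi_{N,\beta}$. This is repairable (split $H_N=\hat H_N+\eta\,H_N(\bs)$, bound $\E\max_{\bs''}|\hat H_N(\bs'')|/N$ by a deterministic $C_\xi$ and absorb it into the ``$+1$''), but it must be done; the paper sidesteps it by performing this decomposition at the outset, so that the only random quantity surviving in its error term is $|H_N(\bs_\star)|$ itself, which is measurable with respect to the conditioning.
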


For $r\in[0,1)^{\S}$, define 
\begin{equation}
\tilde{\xi}_{r}(x)=\xi((1-r^{2})x+r^{2})-\xi(r^{2}),\label{eq:xitilde_r}
\end{equation}
Here, all operations between functions $\S\to\R$ are performed elementwise,
for example, $((1-r^{2})x)(s):=(1-r(s)^{2})x(s)$. Explicitly, 
\begin{equation}
\begin{aligned}\tilde{\xi}_{r}(x) & =\sum_{p:\,|p|\geq2}\Delta_{p}^{2}\Big(\prod_{s\in\S}\left((1-r(s)^{2})x(s)+r(s)^{2}\right)^{p(s)}-\prod_{s\in\S}r(s)^{2p(s)}\Big)\\
 & =\sum_{p:\,|p|\geq1}\Delta_{p,r}^{2}\prod_{s\in\S}x(s)^{p(s)},
\end{aligned}
\label{eq:xitilde_r_2}
\end{equation}
where
\[
\Delta_{p,r}^{2}:=\sum_{p'\geq p}\Delta_{p'}^{2}\prod_{s\in\S}\binom{p'(s)}{p(s)}(1-r(s)^{2})^{p(s)}r(s)^{2(p'(s)-p(s))},
\]
where we write $p'\geq p$ if $p'(s)\geq p(s)$ for all $s\in\S$. 
\begin{rem}
\label{rem:p1}In the Introduction we defined the multi-species mixtures
(\ref{eq:mixture}) with coefficients for $p$ with $|p|\geq2$ and
their corresponding Hamiltonians in (\ref{eq:Hamiltonian-1}). Of
course, one may consider mixtures with non-zero coefficients also
for $p$ with $|p|=1$, for which the summation in the definition
of the corresponding Hamiltonian in (\ref{eq:Hamiltonian-1}) starts
from $k=1$. 
\end{rem}

Note that $\tilde{\xi}_{r}(x)$ is a mixture as in the remark above
and let $\tilde{H}_{N}^{r}(\bs)$ be the corresponding Hamiltonian.
We remark that the same mixture has been considered in several previous
works in the study of the Gibbs measure \cite{geometryMixed,geometryGibbs}
and in the context of the TAP approach \cite{TAPChenPanchenkoSubag,TAPIIChenPanchenkoSubag,FElandscape,TAPmulti1}.
\begin{lem}
\label{lem:Htilde_r}Let $\beta\geq0$ and $r\in[0,1)^{\S}$. Then,
almost surely, 
\begin{equation}
\lim_{N\to\infty}\sup_{\bs\in S_{N}}\Big|\varphi_{N,\beta}(\bs,r)-\frac{\beta}{N}\frac{\xi(r)}{\xi(1)}H_{N}(\bs)-\frac{1}{N}\E\log\int_{S_{N}}e^{\beta\tilde{H}_{N}^{r}(\bs')}d\mu(\bs')\Big|=0.\label{eq:phihat_codimapx-3}
\end{equation}
\end{lem}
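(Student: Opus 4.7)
The plan is to reduce $\varphi_{N,\beta}(\bs,r)$ to the free energy of an auxiliary Gaussian process on the orthogonal slice $B^\perp(\bs) := \{\btau \in S_N : R(\bs,\btau) = 0\}$, identify that process in distribution with $\tilde H_N^r$ up to an independent constant-in-$\btau$ shift, and finally compare the slice free energy to the full-sphere free energy.

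First I would condition out the value of the field at $\bs$. Since $H_N$ is a centered Gaussian field with covariance $N\xi(R(\cdot,\cdot))$, a direct covariance check shows that the process $\tilde G(\bs'):= H_N(\bs') - \frac{\xi(R(\bs,\bs'))}{\xi(1)} H_N(\bs)$ is centered Gaussian and independent of $H_N(\bs)$. For $\bs'\in B(\bs,r)$ the overlap $R(\bs,\bs')$ is the constant $r$, so the conditional mean factors out of the integral, yielding
\[
\varphi_{N,\beta}(\bs,r) = \frac{\beta\xi(r)}{N\xi(1)} H_N(\bs) + \frac{1}{N}\E\log\int_{B(\bs,r)} e^{\beta\tilde G(\bs')}\,d\nu_{\bs,r}(\bs').
\]
By species-preserving rotational invariance of the law of $H_N$, the rightmost expectation is a deterministic quantity independent of $\bs$, so the supremum over $\bs\in S_N$ in the lemma reduces to a single deterministic number and the almost-sure claim becomes trivial.

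Next I would change variables via the affine bijection $\bs' = r\bs + \sqrt{1-r^2}\,\btau$ from $B^\perp(\bs)$ onto $B(\bs,r)$, which sends the uniform probability measure $\nu'$ on $B^\perp(\bs)$ to $\nu_{\bs,r}$. Using $R(\bs'_1,\bs'_2) = r^2 + (1-r^2) R(\btau_1,\btau_2)$ for $\btau_j\in B^\perp(\bs)$, the pulled-back covariance of $\tilde G$ equals $N\tilde\xi_r(R(\btau_1,\btau_2)) + Nc^\star$ with $c^\star := \xi(r^2) - \xi(r)^2/\xi(1)$, which is nonnegative by Cauchy--Schwarz applied to the nonnegative coefficients $\Delta_p$ in $\xi$. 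Hence as processes in $\btau$ one has $\tilde G \stackrel{d}{=} \tilde H_N^r + W$ for an independent Gaussian $W$ of variance $Nc^\star$ constant in $\btau$; the shift passes through the logarithm as $\beta W$ and vanishes in expectation, giving
\[
\varphi_{N,\beta}(\bs,r) - \frac{\beta\xi(r)}{N\xi(1)} H_N(\bs) = \frac{1}{N}\E\log\int_{B^\perp(\bs)} e^{\beta\tilde H_N^r(\btau)}\,d\nu'(\btau).
\]

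It remains to show that this slice free energy differs from $\frac{1}{N}\E\log\int_{S_N} e^{\beta\tilde H_N^r}\,d\mu$ by $o(1)$, and this is where I expect the main difficulty. The upper bound is obtained by averaging the slice integral over $\bs\sim\mu$ and applying Fubini to the symmetric overlap-zero constraint (which recovers the full-sphere integral), followed by Jensen's inequality. For the reverse bound, using the coarea decomposition $\int_{S_N} e^{\beta\tilde H_N^r}\,d\mu = \int_{[-1,1]^{\S}} w_N(r') I_N(r',\bs)\,dr'$ with $I_N(r',\bs) = \int_{B(\bs,r')} e^{\beta\tilde H_N^r}\,d\nu_{\bs,r'}$, together with the asymptotics $\frac{1}{N}\log w_N(r') = \frac{1}{2}\sum_s \lambda(s)\log(1-r'(s)^2) + o(1) \leq o(1)$, one reduces to controlling how the expected slice free energy $\frac{1}{N}\E\log I_N(r')$ varies in $r'$. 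Species-preserving rotational invariance makes this a deterministic function of $r'$ alone, and Gaussian concentration of the slice log-integrals (Borell--TIS) converts expectations into almost-sure bounds; the hard part is arguing that the $r'$-supremum of $\frac{1}{N}\E\log I_N(r') + \frac{1}{2}\sum_s \lambda(s)\log(1-r'(s)^2)$ is asymptotically attained at $r' = 0$, i.e., that restricting from $S_N$ to the codimension-$|\S|$ slice $B^\perp(\bs)$ costs no macroscopic free energy in the limit.
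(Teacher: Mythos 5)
Your reduction is exactly the paper's: you condition on $H_N(\bs)$ to peel off the term $\frac{\beta\xi(r)}{N\xi(1)}H_N(\bs)$ (this is \eqref{eq:decomp}--\eqref{eq:Hhat}), you map the slice $B(\bs,r)$ onto the orthogonal slice by the inverse of the rescaling \eqref{eq:sigmatilde}, you compute the same covariance $N\tilde\xi_r(R)+N\big(\xi(r^2)-\xi(r)^2/\xi(1)\big)$ as in \eqref{eq:xiqcov}, and you absorb the nonnegative constant part into an independent Gaussian that drops out of the expected log-partition function. Up to that point the proposal is correct and coincides with the paper step for step.

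The gap is the step you yourself flag: comparing the free energy on the codimension-$|\S|$ slice (w.r.t.\ $\nu$) with the free energy on $S_N$ (w.r.t.\ $\mu$), i.e.\ the second equality in \eqref{eq:hatbar}, which the paper dismisses with ``of course''. Your easy direction (average over $\bs\sim\mu$, then Jensen) is fine and gives slice $\le$ full exactly. But the route you propose for the converse --- coarea over $r'$ and then showing that $\sup_{r'}\big[\tfrac12\sum_s\lambda(s)\log(1-r'(s)^2)+\tfrac1N\E\log I_N(r')\big]$ is asymptotically attained at $r'=0$ --- will not close. The entropy term is $O(1)$ uniformly in $\beta$, whereas $\tfrac1N\E\log I_N(r')$ is the free energy of the reparametrized mixture $x\mapsto\tilde\xi_r(r'^2+(1-r'^2)x)$, whose variation in $r'$ is of order $\beta$ at low temperature and has no definite sign (that mixture has larger low-degree and smaller high-degree coefficients than the $r'=0$ one, so neither the domination argument of Lemma \ref{lem:RS} nor a Slepian-type comparison applies); in effect you would be proving a replica-symmetry-type maximization statement for all $\beta\ge0$, which cannot follow from entropy counting. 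The standard way to justify this step is softer: after the rescaling, the slice with $\nu$ is isometric, up to dilating each species' radius by $\sqrt{N_s/(N_s-1)}=1+O(1/N)$, to the configuration space of the same model with species sizes $N_s-1$; the $O(1/N)$ dilation perturbs the mixture coefficients by $O(1/N)$ and is controlled by the interpolation in the proof of Lemma \ref{lem:RS}, so everything reduces to the insensitivity of $\tfrac1N\E\log Z_N$ to decreasing each $N_s$ by one. You should argue along those lines (and make explicit what stability of the free energy density in $N$ you are invoking) rather than through the location of the coarea maximizer.
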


The last lemma we need approximates the free energy of $\tilde{H}_{N}^{r}(\bs)$, for small $r$. 
\begin{lem}
\label{lem:codim_mean}Assume \Asm{} and that $\Delta_p\neq0$ for finitely many values of $p$ and  $\beta\leq \beta_{c}$. Then for any $r\in[0,1)^{\S}$, 
\begin{equation}
\limsup_{N\to\infty}\Big|\frac{1}{N}\E\log\int_{S_{N}}e^{\beta\tilde{H}_{N}^{r}(\bs')}d\mu(\bs')-\frac{1}{2}\beta^{2}{\xi}(1)\Big|\leq\kappa(r^{2}),\label{eq:phihat_codimapx-1}
\end{equation}
where $\kappa(x)=\kappa(x,\xi,\beta)$ is a function as in Proposition \ref{prop:main}.
\end{lem}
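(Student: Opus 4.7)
The plan is to combine a trivial Jensen upper bound with a Guerra interpolation lower bound, using overlap concentration in the RS regime as the key input.

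\emph{Upper bound.} Jensen applied to the logarithm gives
\[
\tfrac{1}{N}\E\log Z(\tilde\xi_r)\;\leq\;\tfrac{1}{N}\log\E Z(\tilde\xi_r)\;=\;\tfrac{1}{2}\beta^{2}\tilde\xi_r(1)\;=\;\tfrac{1}{2}\beta^{2}\xi(1)-\tfrac{1}{2}\beta^{2}\xi(r^{2}).
\]
Because every monomial of $\xi$ has total degree $\geq2$, the map $y\mapsto\tfrac{1}{2}\beta^{2}\xi(y)$ satisfies the conditions required of $\kappa$, which settles the upper direction.

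\emph{Lower bound via Guerra interpolation.} For $t\in[0,1]$ set $\xi_{t}:=(1-t)\xi+t\tilde\xi_r$ and $\phi_{N}(t):=\tfrac{1}{N}\E\log Z_{N,\beta}(\xi_{t})$. By hypothesis $\phi_{N}(0)\to\tfrac{1}{2}\beta^{2}\xi(1)$, while $\phi_{N}(1)$ is the quantity of interest. Gaussian integration by parts yields the standard identity
\[
\phi_{N}'(t)\;=\;\tfrac{\beta^{2}}{2}\,\bigl(g(1)-\E\langle g(R_{12})\rangle_{\xi_{t},2}\bigr),
\]
where $g(R):=\tilde\xi_r(R)-\xi(R)=\xi((1-r^{2})R+r^{2})-\xi(r^{2})-\xi(R)$ and $\langle\cdot\rangle_{\xi_{t},2}$ is the two-replica Gibbs expectation under mixture $\xi_{t}$. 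A direct expansion, using the formula $\Delta_{p,r}^{2}=\sum_{p'\geq p}\Delta_{p'}^{2}\binom{p'}{p}(1-r^{2})^{p}r^{2(p'-p)}$, gives $g(0)=0$, $g(1)=-\xi(r^{2})$, and shows that every polynomial coefficient of $g$ is $O(r^{2})$. Consequently $|g(R)|\leq C r^{2}\|R\|$ for $R$ in a neighborhood of the origin, and $|g(R)|\leq C r^{2}$ uniformly on $[-1,1]^{\S}$.

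\emph{Overlap concentration, the crux.} I would then argue that for each $t\in[0,1]$, $\limsup_{N}\E\langle g(R_{12})\rangle_{\xi_{t},2}=o(r^{2})$. For the unperturbed mixture $\xi$, the hypothesis $\beta\leq\beta_{c}$ together with the identity $\frac{d}{d\beta}\E F_{N}(\xi)=\beta\,\E\langle\xi(1)-\xi(R_{12})\rangle$ forces $\E\langle\xi(R_{12})\rangle\to0$ for a.e.\ $\beta'\leq\beta_{c}$, and assumption \Asm{} (together with a standard stability argument) then gives $R_{12}\to0$ in probability under $\langle\cdot\rangle_{\xi}$. Since $\xi_{t}-\xi=t\,g$ is a mixture all of whose coefficients are of order $r^{2}$, a Gaussian-comparison argument transfers this concentration to $\langle\cdot\rangle_{\xi_{t},2}$, with the limiting overlap $q_{t}$ satisfying $q_{t}\to0$ as $r\to0$, uniformly in $t$. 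Combined with the local bound $|g(R)|\leq Cr^{2}\|R\|$, this gives $\limsup_{N}\E\langle g(R_{12})\rangle_{\xi_{t},2}=O(r^{2}\cdot|q_{t}|)=o(r^{2})$. Integrating the Guerra identity and using dominated convergence (via the uniform bound $|g|\leq Cr^{2}$) then yields
\[
\limsup_{N}|\phi_{N}(1)-\phi_{N}(0)|\;\leq\;\tfrac{\beta^{2}}{2}\xi(r^{2})+o(r^{2}),
\]
so that $\limsup_{N}|\phi_{N}(1)-\tfrac{1}{2}\beta^{2}\xi(1)|\leq\kappa(r^{2})$ with $\kappa(y)=\tfrac{\beta^{2}}{2}\xi(y)+o(|y|)$, as required.

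\emph{Main obstacle.} The main technical point is rigorously transferring overlap concentration from $\xi$ (where it follows from the RS identity and assumption \Asm) to the perturbed mixture $\xi_{t}$ at $\beta$ up to and including $\beta_{c}$, without appealing to the Parisi formula; the small linear terms introduced by $\tilde\xi_r-\xi$ prevent a direct application of Lemma \ref{lem:RS}, and the argument must be perturbative around $\xi$.
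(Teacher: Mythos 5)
Your Jensen upper bound and the reduction to showing $\limsup_N\E\langle g(R_{12})\rangle_{\xi_t,2}=o(r^2)$ are correct, and the observation that the crude bound $|g|\leq Cr^2$ only gives $O(r^2)$ (which is \emph{not} an admissible $\kappa$, since $\kappa$ must have vanishing directional derivatives in the variable $r^2$) correctly identifies why overlap concentration is indispensable. But the step you flag as the ``main obstacle'' is a genuine gap, not a technicality. Your interpolation $\xi_t=(1-t)\xi+t\tilde\xi_r$ requires overlap concentration under the Gibbs measure of $\xi_t$ for \emph{every} $t\in(0,1]$, and the proposed ``Gaussian-comparison argument'' does not exist in any standard form: the perturbation $t\,g$ is itself of order $r^2$, i.e.\ of exactly the same order as the quantity $\E\langle g(R_{12})\rangle$ you are trying to show is $o(r^2)$, and comparisons at the level of free energies do not control Gibbs averages of the overlap. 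The paper sidesteps this entirely by choosing a different interpolation: the coefficients $\Delta_{p,t,x}$ (not their squares) are taken affine in $t$, which makes $\E F_{N,\beta}(t)$ \emph{convex} in $t$ by H\"older. Convexity reduces the lower bound to showing that the right-derivative at $t=0$ tends to $0$, and at $t=0$ the relevant Gibbs measure is that of the original model $\xi$ — no transfer to $t>0$ is ever needed. With your path the Hamiltonian is $\sqrt{1-t}\,H+\sqrt{t}\,\tilde H$, convexity in $t$ fails, and you are stuck controlling every $t$.

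There is a second, smaller gap at $t=0$: from $\frac{d}{d\beta}\lim_N\E F_{N,\beta}=\beta\xi(1)$ you correctly get $\E\langle\xi(R_{12})\rangle\to0$, but assumption (A) only says $\xi>0$ on $[0,1]^{\S}\setminus\{0\}$, while $R_{12}$ ranges over $[-1,1]^{\S}$ where $\xi$ may be negative (odd monomials). So $\E\langle\xi(R_{12})\rangle\to0$ does not by itself force $R_{12}\to0$. One needs the asymptotic positivity of the overlap, which the paper obtains by adding a vanishing Ghirlanda--Guerra perturbation Hamiltonian $s_Nh_N$ (Panchenko's multi-species identities, worked out for the spherical case by Bates--Sohn) and invoking Talagrand's positivity principle; this perturbation also has to be carried through the derivative computation. ``A standard stability argument'' does not cover this step.
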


Suppose that $\xi(x)$ satisfies the assumption in \Asm{} and let
$\beta\leq \beta_{c}$. Combining the three lemmas above, we have that
for $r\in[0,1)^{\S}$, almost surely,
\begin{align*}
\limsup_{N\to\infty}\sup_{\bs\in S_{N}} & \Big|\varphi_{N,\beta}(\bs,r,\delta)-\Big(\frac{\beta}{N}\frac{\xi(r)}{\xi(1)}H_{N}(\bs)+\frac{1}{2}\sum_{s\in\S}\lambda(s)\log(1-r(s)^{2})+\frac{1}{2}\beta^{2}{\xi}(1)\Big)\Big|\\
 & \leq\delta c_{\xi}\beta\Big(\max_{\bs\in S_{N}}\frac{|H_{N}(\bs)|}{N}+1\Big)+\beta^{2}\kappa(r^{2}),
\end{align*}
where $c_{\xi}$ and $\kappa(x)$ are as in the lemmas. By \cite[Lemma 25]{TAPmulti1},
for some constant $C_{\xi}>0$ that depends only on $\xi$,
\begin{equation}
\E\max_{\bs\in S_{N}}\frac{|H_{N}(\bs)|}{N}\leq C_{\xi}.\label{eq:max}
\end{equation}
By the Borell-TIS inequality and the Borel-Cantelli lemma, almost
surely,
\[
\limsup_{N\to\infty}\max_{\bs\in S_{N}}\frac{|H_{N}(\bs)|}{N}\leq2C_{\xi}.
\]

Proposition \ref{prop:E}
follows by combining the above. It remains to prove the three lemmas
above. This will be done in Subsections \ref{subsec:pf1}-\ref{subsec:pf3}
below.\qed

\subsection{\label{subsec:pf1}Proof of Lemma \ref{lem:band_to_codim}}

Fix some $\bs_{\star}\in S_{N}$. Since the Hamiltonian $H_{N}(\bs)$
is a Gaussian process, it can be decomposed as
\begin{equation}
H_{N}(\bs)=\hat{H}_{N}(\bs)+\eta(\bs)H_{N}(\bs_{\star}),\label{eq:decomp}
\end{equation}
where $\eta(\bs)H_{N}(\bs_{\star})=\E(H_{N}(\bs)\,|\,H_{N}(\bs_{\star}))$
with
\[
\eta(\bs):=\frac{\E\big(H_{N}(\bs)H_{N}(\bs_{\star})\big)}{\E\big(H_{N}(\bs_{\star})^{2}\big)}=\frac{\xi(R(\bs,\bs_{\star}))}{\xi(1)}
\]
and $\hat{H}_{N}(\bs)$ is a centered Gaussian process, independent
of $H_{N}(\bs_{\star})$, with covariance function
\begin{align*}
\frac{1}{N}\E\big(\hat{H}_{N}(\bs)\hat{H}_{N}(\bs')\big) & =\frac{1}{N}\E\big(H_{N}(\bs)H_{N}(\bs')\big)-\frac{1}{N}\frac{\E\big(H_{N}(\bs)H_{N}(\bs_{\star})\big)\E\big(H_{N}(\bs')H_{N}(\bs_{\star})\big)}{\E\big(H_{N}(\bs_{\star})^{2}\big)}\\
 & =\xi(R(\bs,\bs'))-\frac{\xi(R(\bs,\bs_{\star}))\xi(R(\bs',\bs_{\star}))}{\xi(1)}.
\end{align*}

Note that, since $\eta(\bs)=\frac{\xi(r)}{\xi(1)}$ on $B(\bs_{\star},r)$,
\begin{equation}
\varphi_{N,\beta}(\bs_{\star},r)=\E\Big(\frac{1}{N}\log\int_{B(\bs_{\star},r)}e^{\beta\hat{H}_{N}(\bs)}d\nu(\bs)\Big)+\frac{\beta}{N}\frac{\xi(r)}{\xi(1)}H_{N}(\bs_{\star}).\label{eq:Hhat}
\end{equation}

For any $\bs\in B(\bs_{\star},r,\delta)$,
\[
|\eta(\bs)-\eta(\bs_{\star})|\leq\max_{t:\,t(s)\in[r(s)-\delta,r(s)+\delta]}\left|\frac{\xi(t)}{\xi(1)}-\frac{\xi(r)}{\xi(1)}\right|\leq\frac{\delta}{\xi(1)}\sum_{s\in\S}\frac{d}{dx(s)}\xi(1)=:c_{\xi}\delta,
\]
and therefore
\[
\Big|\varphi_{N,\beta}(\bs_{\star},r,\delta)-\E\Big(\frac{1}{N}\log\int_{B(\bs_{\star},r,\delta)}e^{\beta\hat{H}_{N}(\bs)}d\mu(\bs)\Big)-\frac{\beta}{N}\frac{\xi(r)}{\xi(1)}H_{N}(\bs_{\star})\Big|\leq\frac{\beta}{N}c_{\xi}\delta|H_{N}(\bs_{\star})|.
\]

Hence, to prove the lemma it will be enough to show that for some
$c>0$ depending only on $\xi$, for large $N$,
\[
\begin{aligned} & \bigg|\E\frac{1}{N}\log\int_{B(\bs_{\star},r)}e^{\beta\hat{H}_{N}(\bs)}d\nu(\bs)+\frac{1}{2}\sum_{s\in\S}\lambda(s)\log(1-r(s)^{2})\\
 & -\E\frac{1}{N}\log\int_{B(\bs_{\star},r,\delta)}e^{\beta\hat{H}_{N}(\bs)}d\mu(\bs)\bigg|<\delta c\beta.
\end{aligned}
\]

Since the variance of $\hat{H}_{N}(\bs)$ is bounded uniformly in
$\bs$ by $N\xi(1)$, the variance of the unconditional Hamiltonian,
from the concentration of the free energies around their mean (see
\cite[Theorem 1.2]{PanchenkoBook}) it will be enough to show that
\begin{equation}
\begin{aligned} & \bigg|\frac{1}{N}\log\int_{B(\bs_{\star},r)}e^{\beta\hat{H}_{N}(\bs)}d\nu(\bs)+\frac{1}{2}\sum_{s\in\S}\lambda(s)\log(1-r(s)^{2})\\
 & -\frac{1}{N}\log\int_{B(\bs_{\star},r,\delta)}e^{\beta\hat{H}_{N}(\bs)}d\mu(\bs)\bigg|<\delta c\beta
\end{aligned}
\label{eq:diff1-1}
\end{equation}
with probability that goes to $1$ as $N\to\infty$, for $c$ as above.

By \cite[Lemma 25]{TAPmulti1}, for any $C>0$, for some $L>0$ that
depends only on $\xi$, with probability at least $1-e^{-NC}$, for
all $\bs,\,\bs'\in S_{N}$,
\[
\frac{1}{N}\big|H_{N}(\bs)-H_{N}(\bs')\big|\leq L\max_{s\in\S}\sqrt{R_{s}(\bs-\bs',\bs-\bs')}.
\]
For any $\bs,\,\bs'\in S_{N}$,
\[
|\eta(\bs)-\eta(\bs')|\leq\frac{1}{\xi(1)}\sum_{s\in\S}\frac{d}{dx(s)}\xi(1)\cdot\sqrt{R_{s}(\bs-\bs',\bs-\bs').}
\]
Hence, from (\ref{eq:decomp}) and the fact that $H_{N}(\bs_{\star})$
is a Gaussian variable with zero mean and variance $N\xi(1)$, with
the probability going to $1$ as $N\to\infty$, for all $\bs,\,\bs'\in S_{N}$,
\[
\frac{1}{N}\big|\hat{H}_{N}(\bs)-\hat{H}_{N}(\bs')\big|\leq2L\max_{s\in\S}\sqrt{R_{s}(\bs-\bs',\bs-\bs')}.
\]

On this event, (\ref{eq:diff1-1}) holds since by the co-area formula,
\begin{align*}
\int_{B(\bs_{\star},r,\delta)}e^{\beta\hat{H}_{N}(\bs)}d\mu(\bs) & =\int_{B(\bs_{\star},r)}\int_{T(\bs,\delta)}e^{\beta\hat{H}_{N}(\bs')}\prod_{s\in\S}\frac{\omega_{N_{s}-1}}{\omega_{N_{s}}}\bigg(1-R_{s}(\bs',\bs')\bigg)^{\frac{N_{s}-2}{2}}d\rho(\bs')d\nu(\bs)
\end{align*}
where $\omega_{d}$ denotes the volume of the unit sphere in $\R^{d}$,
$\rho$ denotes the volume measure corresponding to the Riemannian
metric on $T(\bs,\delta)$ induced by the Euclidean structure in $\R^{N}$,
and 
\[
T(\bs,\delta):=\Big\{\bs'\in B(\bs_{\star},r,\delta):\,\frac{P_{s}(\bs')}{R_{s}(\bs',\bs')}=\frac{P_{s}(\bs)}{R_{s}(\bs,\bs)},\:\forall s\in\S\Big\},
\]
where $P_{s}(\bs)$ is the orthogonal projection of $(\sigma_{i})_{i\in I_{s}}$
to the orthogonal space to $(\sigma_{\star,i})_{i\in I_{s}}$.\qed

\subsection{\label{subsec:pf2}Proof of Lemma \ref{lem:Htilde_r}}

Fix some $r\in[0,1)^{\S}$. Recall the decomposition (\ref{eq:decomp})
of $H_{N}(\bs)$. In light of (\ref{eq:Hhat}), we need to prove that
\begin{equation}
\lim_{N\to\infty}\Big|\frac{1}{N}\E\log\int_{B(\bs_{\star},r)}\exp\Big(\beta\hat{H}_{N}(\bs)\Big)d\nu(\bs)-\frac{1}{N}\E\log\int_{S_{N}}\exp\Big(\beta\tilde{H}_{N}^{r}(\bs)\Big)d\mu(\bs)\Big|=0,\label{eq:phihat_codimapx-1-1}
\end{equation}
for some arbitrary $\bs_{\star}\in S_{N}$. 

For any $\bs=(\sigma_{i})_{i=1}^{N}\in B(\bs_{\star},r)$, define
$\tilde{\bs}=(\tilde{\sigma}_{i})_{i=1}^{N}\in B(\bs_{\star},0)\subset S_{N}$
by 
\begin{equation}
\tilde{\sigma}_{i}:=\sqrt{\frac{1}{1-r(s)^{2}}}(\sigma_{i}-r(s)\sigma_{\star,i}),\text{\ensuremath{\quad}if }i\in I_{s},\label{eq:sigmatilde}
\end{equation}
and define the Hamiltonian $\bar{H}_{N}(\tilde{\bs})=\hat{H}_{N}(\bs)$
on $B(\bs_{\star},0)$. Then, for any two points $\tilde{\bs}^{1},\,\tilde{\bs}^{2}$
from $B(\bs_{\star},0)$, by a straightforward calculation,
\begin{equation}
\frac{1}{N}\E\big(\bar{H}_{N}(\tilde{\bs}^{1})\bar{H}_{N}(\tilde{\bs}^{2})\big)=\tilde{\xi}_{r}(R(\tilde{\bs}^{1},\tilde{\bs}^{2}))+\xi(r^{2})-\frac{\xi(r)^{2}}{\xi(1)},\label{eq:xiqcov}
\end{equation}
where the mixture $\tilde{\xi}_{r}(x)$ is defined in (\ref{eq:xitilde_r}).

Extend the Hamiltonian $\bar{H}_{N}$ from $B(\bs_{\star},0)$ to
a centered Gaussian field on $S_{N}$ whose covariance is given by
(\ref{eq:xiqcov}). Of course,
\begin{equation}
\begin{aligned}\frac{1}{N}\E\log\int_{B(\bs_{\star},r)}\exp\Big(\beta\hat{H}_{N}(\bs)\Big)d\nu(\bs) & =\frac{1}{N}\E\log\int_{B(\bs_{\star},0)}\exp\Big(\beta\bar{H}_{N}(\bs)\Big)d\nu(\bs)\\
 & =\frac{1}{N}\E\log\int_{S_{N}}\exp\Big(\beta\bar{H}_{N}(\bs)\Big)d\mu(\bs)+o_{N}(1).
\end{aligned}
\label{eq:hatbar}
\end{equation}

Since $A(r)^{2}:=\xi(r^{2})-\frac{\xi(r)^{2}}{\xi(1)}\geq0$, we may
write the Hamiltonian $\bar{H}_{N}$ as
\[
\bar{H}_{N}(\bs)=\tilde{H}_{N}^{r}(\bs)+\sqrt{N}A(r)X,
\]
where $X$ is a standard Gaussian variable independent of the Hamiltonian
$\tilde{H}_{N}^{r}(\bs)$ with mixture $\tilde{\xi}_{r}(x)$. Obviously,
\begin{equation}
\frac{1}{N}\E\log\int_{S_{N}}\exp\Big(\beta(\tilde{H}_{N}^{r}(\bs)+\sqrt{N}A(r)X)\Big)d\mu(\bs)=\frac{1}{N}\E\log\int_{S_{N}}\exp\Big(\beta\tilde{H}_{N}^{r}(\bs)\Big)d\mu(\bs).\label{eq:HtildeF}
\end{equation}

Combining (\ref{eq:hatbar}) and (\ref{eq:HtildeF}) proves (\ref{eq:phihat_codimapx-1-1}),
and completes the proof.\qed

\subsection{\label{subsec:pf3}Proof of Lemma \ref{lem:codim_mean}}

Let $\beta\leq \beta_c$ and $r\in[0,1)^\S$. By \eqref{eq:Jensen},
\[
\frac{1}{N}\E\log\int_{S_{N}}e^{\beta\tilde{H}_{N}^{r}(\bs)}d\mu(\bs)\leq \frac 12\beta^2\tilde{\xi}_{r}(1)\leq \frac 12\beta^2 {\xi}(1),
\]
thus we only need to prove the lower bound 
\begin{equation}
	\liminf_{N\to\infty}\frac{1}{N}\E\log\int_{S_{N}}e^{\beta\tilde{H}_{N}^{r}(\bs)}d\mu(\bs)\geq \frac{1}{2}\beta^{2}{\xi}(1)-\kappa(r^{2}),\label{eq:liminfHtilde}
\end{equation}
for $\kappa(x)$ as in the statement of the lemma. 

Given $x\in[0,1)^\S$ and $t>0$, by setting $r=\sqrt{tx}$ (where the square root is applied elementwise $\sqrt{tx}(s):=\sqrt{tx(s)}$), \eqref{eq:liminfHtilde} becomes 
\begin{equation*}
	\liminf_{N\to\infty}\E \tilde F_{N,\beta}(t)\geq \frac{1}{2}\beta^{2}{\xi}(1)-\kappa(tx),
\end{equation*}
where we define
\begin{equation*}
	\tilde F_{N,\beta}(t)=\frac{1}{N}\log\int_{S_{N}}e^{\beta\tilde{H}_{N}^{\sqrt{tx}}(\bs)}d\mu(\bs).
\end{equation*}

Recall that, in distribution,
\[
\tilde{H}_{N}^{\sqrt{tx}}(\bs) = \sum_{p:|p|\geq1}  \Delta_{p,\sqrt{tx}} H_{N,p}(\bs),
\] where the coefficients $\Delta_{p,\sqrt{tx}}$ are as in \eqref{eq:xitilde_r_2} and $H_{N,p}(\bs)$ are the pure $p$-spin models with mixture $\prod_{s\in\S}x(s)^{p(s)}$ which we assume to be independent. Define
\[
\bar{H}_{N}^{t,x}(\bs) = \sum_{p:|p|\geq1}  \Delta_{p,t,x} H_{N,p}(\bs),\quad\mbox{where\ }\quad \Delta_{p,t,x}= \Delta_{p}+t\cdot \frac{d}{d\epsilon}\Big|_{\epsilon=0}\Delta_{p,\sqrt{\epsilon x}}.
\]
Since we assume that $\Delta_p$ for finitely many values of $p$, for small enough $t\geq0$,  $\Delta_{p,t,x}\geq0$ for all $p$.

Of course, for some constants $C_{p}$ and $C:=\sum_p C_{p}<\infty$  that depend on $\xi$ and $x$, for small $t$,
\[
|\Delta_{p,t,x}^2-\Delta_{p,\sqrt{tx}}^2|\leq C_{p}t^2
\]
and by Gaussian integration by parts
\[
|\bar F_{N,\beta}(t) - \tilde F_{N,\beta}(t)|\leq C t^2,
\]
where we define
\begin{equation*}
	\bar F_{N,\beta}(t)=\frac{1}{N}\log\int_{S_{N}}e^{\beta\bar{H}_{N}^{t,x}(\bs)}d\mu(\bs).
\end{equation*}

Hence, it will be enough to prove that
\begin{equation}
	\label{eq:Fbart}
	\liminf_{N\to\infty}\E \bar F_{N,\beta}(t)\geq \frac{1}{2}\beta^{2}{\xi}(1)-\kappa(tx),
\end{equation}
for an appropriate function $\kappa(x)$.

In order to be able to invoke Talagrand's positivity principle \cite{TalagBook2003,PanchenkoBook}, we will add a perturbation to the Hamiltonian.  Its definition
is taken from \cite{PanchenkoMulti} where Panchenko
introduced a multi-species version of the Ghirlanda-Guerra identities which induce the positivity of overlaps,
and showed that they are satisfied in the presence of the perturbation
Hamiltonian. The results of \cite{PanchenkoMulti} concern the multi-species SK
model, but they are general and also cover the multi-species spherical
models. The proofs for the spherical case have been worked out in
\cite{BatesSohn1}.

Let $\mathscr{W}$ be a countable dense subset of $[0,1]^{\S}$.
For any $p\geq1$ and vector 
\[
w=(w_{s})_{s\in\S}\in\mathscr{W}
\]
define $s_{i}(w)=\sqrt{w_{s}}$ for $i\in I_{s}$ and $s\in\S$,
and consider the Hamiltonian 
\[
h_{N,w,p}(\bs)=\frac{1}{N^{\frac{p}{2}}}\sum_{1\leq i_{1},\ldots,i_{p}\leq N}g_{i_{1},\ldots,i_{p}}^{w,p}\sigma_{i_{1}}s_{i_{1}}(w)\cdots\sigma_{i_{p}}s_{i_{p}}(w),
\]
where $g_{i_{1},\ldots,i_{p}}^{w,p}$ are i.i.d. standard Gaussian
variables, independent for all combinations of indices $p\geq1$ and
$1\leq i_{1},\ldots,i_{p}\leq N$. 

Consider some one-to-one function $j:\mathscr{W}\to\mathbb{N}$. Let
$y=(y_{w,p})_{w\in\mathscr{W},p\geq1}$ be i.i.d. random variables
uniform in the interval $[1,2]$ and independent of all other variables.
Define the Hamiltonian 
\[
h_{N}(\bs)=\sum_{w\in\mathscr{W}}\sum_{p\geq1}2^{-j(w)-p}y_{w.p}h_{N,w,p}(\bs).
\]
Let $\gamma$ be an arbitrary number in $(0,1/2)$ and set $s_{N}=N^{\gamma}$.
For $t\in[0,1]$, define
\begin{equation*}
	H_{N}^{t,x}(\bs)=H_{N}^{t,x}(\bs)+s_{N}h_{N}(\bs)
	\label{eq:pert}
\end{equation*}
and
\begin{equation*}
F_{N,\beta}(t)=\frac{1}{N}\log\int_{S_{N}}e^{\beta{H}_{N}^{t,x}(\bs)}d\mu(\bs).
\end{equation*}

Conditional on $y=(y_{w,p})_{w\in\mathscr{W},p\geq1}$, $h_{N}(\bs)$
is a Gaussian process with variance bounded by $4$ (see e.g. \cite[(26)]{PanchenkoMulti}). Hence, using
Jensen's inequality, we have that
\begin{equation}
	\begin{aligned}\Big| F_{N,\beta}(t) - \bar F_{N,\beta}(t)\Big| & \leq\frac{4\beta^{2}s_{N}^{2}}{N}.
	\end{aligned}
\end{equation}
Since $s_{N}^{2}/N\to0$, \eqref{eq:Fbart} will follow if we prove that
\begin{equation}
	\label{eq:Ft}
	\liminf_{N\to\infty}\E F_{N,\beta}(t)\geq \frac{1}{2}\beta^{2}{\xi}(1)-\kappa(tx),
\end{equation}

For $t=0$, $\bar F_{N,\beta}(0)=F_{N,\beta}$. Therefore, since $\beta\leq\beta_c$, \eqref{eq:Fbart} and \eqref{eq:Ft} hold with $\kappa(0)=0$.
Since $\Delta_{p,t,x}$ are affine in $t$, using H\"{o}lder's inequality one can check that $ F_{N,\beta}(t)$ is a convex function of $t$ (and thus has one-sided derivatives). Hence, in order to show that there exists a function $\kappa(x)$ which satisfies \eqref{eq:kappa0} and 	\eqref{eq:Ft}, it will be enough to show that
\begin{equation}
	\label{eq:Fbartd}
	\lim_{N\to\infty}\frac{d}{dt}^+\Big|_{t=0}\E  F_{N,\beta}(t) =  0,
\end{equation}
where $\frac{d}{dt}^+\big|_{t=0}$ denotes the derivative from the right at $0$.

Let $G_{N,\beta}^0$ be the Gibbs measure 
\begin{equation*}
	d G_{N,\beta}^0(\bs)=\frac{e^{\beta {H}_{N}^{0,x}(\bs)}}{\int_{S_{N}}e^{\beta {H}_{N}^{0,x}(\bs')}d\mu(\bs')}d\mu(\bs)
\end{equation*}
corresponding to 
\begin{equation}
\label{eq:H0}
{H}_{N}^{0,x}(\bs)\overset{d}{=} H_N(\bs) +s_Nh_N(\bs).	
\end{equation}
For any function $f(\bs^{1},\ldots,\bs^{n})$ of $n$ points from
$S_{N}$ denote 
\[
\langle f(\bs^{1},\ldots,\bs^{n})\rangle =\int_{S_{N}^{n}}f(\bs^{1},\ldots,\bs^{n})d( G_{N,\beta}^0)^{\otimes n},
\]
where $(G_{N,\beta}^0)^{\otimes n}$ denotes the $n$-fold product
measure of $G^0_{N,\beta}$ with itself.

Note that
\begin{equation}
	\frac{d}{dt}^+\Big|_{t=0}\E  F_{N,\beta}(t)=\frac{1}{N}\E\Big\langle
	\beta \sum_{p:|p|\geq1} \frac{d}{d\epsilon}\Big|_{\epsilon=0}\Delta_{p,\sqrt{\epsilon x}}\cdot H_{N,p}(\bs)
	\Big\rangle.\label{eq:ddtFt}
\end{equation}

By Gaussian integration by parts \cite[Lemma 1.1]{PanchenkoBook}, the right-hand side of \eqref{eq:ddtFt} is equal to 
\begin{equation}
	\E\,\big\langle C(\bs^{1},\bs^{1})- C(\bs^{1},\bs^{2})\big\rangle,
\end{equation}
where 
\begin{align*}
C(\bs^{1},\bs^{2})&:=\frac{\beta^2}{N}\sum_{p:|p|\geq1} \Big(\Delta_{p}\frac{d}{d\epsilon}\Big|_{\epsilon=0}\Delta_{p,\sqrt{\epsilon x}}\Big)\cdot\E\Big(
 H_{N,p}(\bs^1)H_{N,p}(\bs^2)
\Big)\\
&=\frac12\beta^2 \eta_x(R(\bs^{1},\bs^{2}))
,
\end{align*}
where we denote $\partial_s\xi(r)=\frac{d}{dr(s)}\xi(r)$ and for $z:[-1,1]^\S\to\R$,
\[
\eta_x(z):=\frac{d}{d\epsilon}\Big|_{\epsilon=0} \tilde\xi_{\sqrt{\epsilon x}}(z)= \sum_{s\in\S}\Big(
\partial_s\xi(z)x(s)(1-z(s))-\partial_s\xi(0)x(s)
\Big).
\]

Combining the above, we obtain that
\begin{equation}
	\lim_{N\to\infty}\frac{d}{dt}^+\Big|_{t=0}\E  F_{N,\beta}(t) = \frac12\beta^2 \Big( 
	\eta_x(1)-\lim_{N\to\infty}\E \big\langle \eta_x(R(\bs^1,\bs^2))\big\rangle
	\Big).
	\label{eq:ddt+}
\end{equation}

Note that since $\partial_s\xi(0)=0$,
\begin{equation}
\eta_x(1) = \eta_x(0) = 0.
\label{eq:eta01}
\end{equation}
Hence, to complete the proof of Lemma \ref{lem:codim_mean} it will be enough to show that for any $\epsilon>0$,
\begin{equation}
\lim_{N\to\infty}\E \Big\langle 
\mathbf{1}\big\{\max_{s\in\S}|R_{s}(\bs^{1},\bs^{2})|<\epsilon\big\}
\Big\rangle=0,\label{eq:R0}
\end{equation}
where $\mathbf{1}\{A\}$ is the indicator of an event $A$.

Note that since $\beta\leq \beta_c$,\footnote{For $\beta=\beta_c$ the derivative from the left of $\lim_{N\to\infty}\E  F_{N,\beta}$ is $\beta_c\xi(1)$. Since $\lim_{N\to\infty}\E  F_{N,\beta}$ is convex in $\beta$, its derivative from the right exists and is bounded from below by the derivative from the left. The latter is bounded by $\beta_c\xi(1)$, by \eqref{eq:Jensen}. Hence the derivative at $\beta_c$ exists and is equal to $\beta_c\xi(1)$.}
\begin{align*}
\beta\xi(1)&=\frac{d}{d\beta}\lim_{N\to\infty}\E  F_{N,\beta}(0) =\lim_{N\to\infty}\frac{d}{d\beta}\E  F_{N,\beta}(0) = 
\lim_{N\to\infty}\frac{1}{N}\E\big\langle {H}_{N}^{0,x}(\bs)
\big\rangle\\ 
&=\beta\Big(
\xi(1) - \lim_{N\to\infty}\E\big\langle \xi(R(\bs^1,\bs^2))\big\rangle
\Big),
\end{align*}
where the limit and derivative may be interchanged by the convexity of $\beta\mapsto \E  F_{N,\beta}(0)=\E  F_{N,\beta}$, and the last equality follows from Gaussian integration by parts. 

Thanks to the perturbation term $s_Nh_N(\bs)$ in \eqref{eq:pert}, from Lemma 3.3 of \cite{BatesSohn1}, for any $\epsilon>0$, 
\begin{equation*}
	\lim_{N\to\infty}\E\Big\langle\mathbf{1}\big\{\min_{s\in\S}R_{s}(\bs^{1},\bs^{2})<-\epsilon\big\}\Big\rangle=0.
\end{equation*}

Combining the above with the assumption \Asm{} one easily concludes \eqref{eq:R0}, which completes the proof.\qed

\section*{Appendix}
In this appendix we explain how the following result follows by slightly modifying our proofs. Here we denote by $\langle\cdot\rangle$ averaging by the Gibbs measure $G_{N,\beta}$ corresponding directly to $H_N(\bs)$,
\begin{equation*}
	d G_{N,\beta}(\bs)=\frac{e^{\beta {H}_{N}(\bs)}}{\int_{S_{N}}e^{\beta {H}_{N}(\bs')}d\mu(\bs')}d\mu(\bs).
\end{equation*}

\begin{lem}
	\label{lem:appendix}
	Let $H_N(\bs)$ be a model corresponding to a mixture $\xi(x)=\sum_{p\in P}\Delta_p^2\prod_{s\in\S}x(s)^{p(s)}$ such that  $\Delta_p\neq0$ for finitely many $p$ and let $\beta\leq\beta_c$. Suppose that for any $\epsilon>0$,
	\begin{equation}
		\lim_{N\to\infty}\E \Big\langle 
		\mathbf{1}\big\{\max_{s\in\S}|R_{s}(\bs^{1},\bs^{2})|<\epsilon\big\}
		\Big\rangle=0.\label{eq:R0-2}
	\end{equation}
Then, the matrix 
\begin{equation}
	\label{mat}\Big(\frac{d}{dr(s)}\frac{d}{dr(t)}f_{\beta}(0)\Big)_{s,t\in\S}
\end{equation} is negative semi-definite.
\end{lem}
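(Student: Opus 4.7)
The strategy is to re-run the chain of implications in Section \ref{sec:PfMainProp} and then apply the resulting appendix-version of Proposition \ref{prop:main}, replacing the role of the perturbation and of assumption \Asm{} by the direct overlap concentration hypothesis \eqref{eq:R0-2}.

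First, I would adapt Lemma \ref{lem:codim_mean} by dropping the perturbation term $s_N h_N$ entirely. In the proof as written, this term (together with \Asm) is used only to deduce the overlap concentration \eqref{eq:R0} under the perturbed Gibbs measure $G_{N,\beta}^0$ via the multi-species Ghirlanda-Guerra identities and Talagrand's positivity principle. Since \eqref{eq:R0-2} is exactly the same overlap concentration statement but assumed directly for the unperturbed $G_{N,\beta}$, one may carry out the Gaussian integration by parts argument leading to
\[
\lim_{N\to\infty}\frac{d}{dt}^+\Big|_{t=0}\E\bar F_{N,\beta}(t)=\tfrac{1}{2}\beta^{2}\bigl(\eta_{x}(1)-\lim_{N\to\infty}\E\langle\eta_{x}(R(\bs^{1},\bs^{2}))\rangle\bigr)
\]
with $\langle\cdot\rangle$ now taken against $G_{N,\beta}$ itself. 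Continuity of $\eta_{x}$, the identities $\eta_{x}(0)=\eta_{x}(1)=0$, and \eqref{eq:R0-2} then give that this derivative vanishes, and the convexity-in-$t$ step of the proof produces the function $\kappa$ with the required properties \eqref{eq:kappa0}. As assumption \Asm{} is not used elsewhere in Section \ref{sec:PfMainProp}, Proposition \ref{prop:E} and Proposition \ref{prop:main} are thereby established under the hypothesis \eqref{eq:R0-2}.

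Next, I would apply this appendix-version of Proposition \ref{prop:main} at the given $\beta$ to bound $f_{\beta}$ near the origin. Fix $r\in[0,1)^{\S}$ and $t>0$. With probability tending to $1$ there exists $\bs\in L_{\beta}(\epsilon)\cap A_{r}(t)$, and the defining inequality of $A_{r}(t)$ yields $\phi_{N,\beta}(\bs,r,\delta)>f_{\beta}(r)+\tfrac{1}{2}\beta^{2}\xi(1)-\kappa(r^{2})-t$. Since $\phi_{N,\beta}(\bs,r,\delta)\leq F_{N,\beta}$ and $\E F_{N,\beta}\to\tfrac{1}{2}\beta^{2}\xi(1)$ (because $\beta\leq\beta_{c}$), the concentration of the free energy yields $f_{\beta}(r)\leq\kappa(r^{2})+t$ in the limit, and thus $f_{\beta}(r)\leq\kappa(r^{2})$ after letting $t\to0^{+}$ (note that $\kappa$ does not depend on $t$).

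Finally, I would convert this into the Hessian bound. Writing $H:=\nabla^{2}f_{\beta}(0)$ and using $f_{\beta}(0)=0$, $\nabla f_{\beta}(0)=0$, Taylor expansion gives $f_{\beta}(\epsilon y)=\tfrac{\epsilon^{2}}{2}y^{T}Hy+O(\epsilon^{3})$. Applying the previous estimate at $r=\epsilon y$ for $y\in[0,1)^{\S}$, together with $\kappa(\epsilon^{2}y^{2})=o(\epsilon^{2})$ (which follows from \eqref{eq:kappa0} by setting its small parameter to $\epsilon^{2}$), and dividing by $\epsilon^{2}$ before sending $\epsilon\to0^{+}$, yields $y^{T}Hy\leq0$ for every $y\in[0,\infty)^{\S}$. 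Since the off-diagonal entries of $H=-\mathrm{diag}(\lambda(s))+\beta^{2}\nabla^{2}\xi(0)$ are nonnegative (they are sums of squared coefficients $\Delta_{p}^{2}\geq0$ with $|p|=2$), the matrix $H+cI$ is entrywise nonnegative for sufficiently large $c$; by Perron-Frobenius, the largest eigenvalue $\lambda_{\max}(H)$ is attained by a nonnegative eigenvector $y_{\star}$, and the previous step gives $\lambda_{\max}(H)\|y_{\star}\|^{2}=y_{\star}^{T}Hy_{\star}\leq0$. Hence $H$ is negative semi-definite. The main technical point requiring care is the verification that removing the perturbation in Section \ref{sec:PfMainProp} does not disturb the other lemmas there, but this is essentially automatic since the perturbation entered only through the overlap concentration that is now assumed directly.
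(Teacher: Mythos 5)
Your proposal is correct and follows essentially the same route as the paper: drop the perturbation from the proof of Lemma \ref{lem:codim_mean}, use the hypothesis \eqref{eq:R0-2} (read as overlap concentration at the origin, which is clearly what is intended) in place of \Asm{} and the positivity principle to get $\lim_N\frac{d}{dt}^+|_{t=0}\E\bar F_{N,\beta}(t)=0$, and hence the conclusion of Proposition \ref{prop:main} at $\beta$, then play the resulting free-energy lower bound against Jensen's inequality. The only cosmetic difference is that you run the last step directly (deriving $f_\beta(r)\leq\kappa(r^2)$, Taylor expanding, and using Perron--Frobenius to pass from nonnegative directions to all directions) whereas the paper argues by contradiction via the argument of Proposition \ref{prop:singular}; the reduction to nonnegative directions is the same observation in both cases.
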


Note that above instead of assuming \Asm{} as we did in the main results, we assume \eqref{eq:R0-2}. This result is crucial to the analysis of the TAP representation of the multi-species pure $p$-spin models in \cite{TAPmulti2}, where indeed we need to deal with a model which for some values of $p$  may not satisfy \Asm{}.

First we explain how the conclusion of Proposition \ref{prop:main} follows in the setting of Lemma \ref{lem:appendix}.
The only place we used the assumption of \Asm{} in the proof of Proposition \ref{prop:main} is the very last step in the proof of Lemma \ref{lem:codim_mean}, to prove \eqref{eq:R0}. 
So we only need to explain how to prove the conclusion \eqref{eq:phihat_codimapx-1} of Lemma \ref{lem:codim_mean}. By the same argument as in the proof of the latter lemma, the lemma  follows if we can prove \eqref{eq:Fbart}. By the argument we used around \eqref{eq:Fbartd}, to prove \eqref{eq:Fbart} it is enough to show that 
\begin{equation}
	\label{eq:Fbartd00}
	\lim_{N\to\infty}\frac{d}{dt}^+\Big|_{t=0}\E  \bar F_{N,\beta}(t) =  0.
\end{equation}

As in \eqref{eq:ddt+}, 
\begin{equation}
	\lim_{N\to\infty}\frac{d}{dt}^+\Big|_{t=0}\E  \bar F_{N,\beta}(t) = \frac12\beta^2 \Big( 
	\eta_x(1)-\lim_{N\to\infty}\E \big\langle \eta_x(R(\bs^1,\bs^2))\big\rangle
	\Big),
\end{equation}
where now the averaging is w.r.t. the Gibbs measure corresponding to the Hamiltonian without perturbation, which have the same law as $G_{N,\beta}$. Thus, \eqref{eq:Fbartd00} follows from \eqref{eq:R0-2} and \eqref{eq:eta01}. This proves the conclusion of Proposition \ref{prop:main} in the setting of Lemma \ref{lem:appendix}.

Now assume towards contradiction that the matrix \eqref{mat} has a positive eigenvalue. 
Then for some $r$, which we can choose to be in $[0,1)^\S$,
\[
\frac{d}{d\alpha}\Big|_{\alpha=0}f_{\beta}(\alpha r)=0\text{\ensuremath{\quad and\quad}}\frac{d^{2}}{d\alpha^{2}}\Big|_{\alpha=0}f_{\beta}(\alpha r)>0.
\]
By the same argument as in the proof of Proposition \ref{prop:singular}, this leads to a contradiction to the fact that $\beta\leq \beta_c$, from which we conclude that
  \eqref{mat} is negative  semi-definite.

\bibliographystyle{plain}
\bibliography{master}

\end{document}